\renewcommand\ge\geqslant
\renewcommand\geq\geqslant
\renewcommand\le\leqslant
\renewcommand\leq\leqslant
\numberwithin{equation}{section}
\newcommand{\refpart}[1]{{\it (#1)}}  
\newcommand{\PP}{\mathbb{P}}
\newcommand{\CC}{\mathbb{C}}
\newcommand{\RR}{\mathbb{R}}
\newcommand{\DD}{{\mathcal D}}
\newcommand{\DI}{{\mathcal I}}
\newcommand{\DR}{{\mathcal R}}
\newtheorem{theorem}{Theorem}[section]
\newtheorem{lemma}[theorem]{Lemma} 
\newtheorem{propose}[theorem]{Proposition}
\newtheorem{remark}[theorem]{Remark}  
\newtheorem{example}[theorem]{Example}
\newcommand{\pcoor}[1]{%
\begingroup\lccode`~=`: \lowercase{\endgroup
\edef~}{\mathbin{\mathchar\the\mathcode`:}\nobreak}%
(
\begingroup
\mathcode`:=\string"8000
#1%
\endgroup 
)
}
\title{\bf Dupin Cyclides as a Subspace of Darboux Cyclides}
\author{
Jean Michel Menjanahary\; and\; Raimundas Vidunas \\
\em Faculty of Mathematics and Informatics, Vilnius University, Lithuania
}
\date{\empty}
\begin{document}
\maketitle

\begin{abstract}
	Dupin cyclides are interesting algebraic surfaces used in geometric design and architecture
	to join canal surfaces smoothly and to construct model surfaces. 
	Dupin cyclides are special cases of Darboux cyclides, which in turn are rather general surfaces in $\RR^3$
	of degree 3 or 4. This article derives the algebraic conditions %
	for recognition of Dupin cyclides among the general implicit form of Darboux cyclides.
	We aim at practicable sets of algebraic equations on the coefficients of the implicit equation,
	each such set defining a complete intersection (of codimension 4) locally. 
	Additionally, the article classifies all real surfaces and lower dimensional degenerations 
	defined by the implicit equation for Dupin cyclides.
\end{abstract}

\noindent
{\em Keywords: Dupin cyclides, Darboux cyclides, canal surfaces, geometric design, architecture.}

\section{Introduction}
{\em Darboux cyclides} are classical algebraic surfaces in $\RR^3$ that 
have promising applications in geometric design and architecture. Their implicit equation has the form
\begin{align} \label{eq:gendarb}
	a_0\big(x^2+y^2+z^2\big)^2&+2(b_1x+b_2y+b_3z)\big(x^2+y^2+z^2\big)\nonumber\\
	&+c_1x^2+c_2y^2+c_3z^2+2d_1yz+2d_2xz+2d_3xy\\
	&+2e_1x+2e_{2}y+2e_{3}z+f_{0} =  0.\nonumber
\end{align}
Here $a_0,b_1,\ldots,f_0$ are real coefficients. 
Remarkably, Darboux cyclides are covered by several families of circles  \cite{Darboux,DarCyc,Kra2}. 
Hence they are natural candidates to model a surface composed of patches blended along circles. 
This task is still challenging for more general Daboux cyclides \cite{Zhao},
but the special case of Darboux cyclides called {\em Dupin cyclides} have definite applications already 
\cite{Martin, DuPratt1, DuPratt2,Druoton, Blend1, Blend2, Zube, Kra}. 
Dupin cyclides are canal surfaces 
whose curvature lines are circles or lines.
They are useful to join pipes between canal surfaces \cite{Druoton} 
and to model surfaces smoothly blended along curvature lines \cite{Blend1, Blend2, Zube, Kra}. 
Significantly, the set of Dupin cyclides is stable under the offsetting at a fixed distance along 
the surface normals \cite{DuPratt1,Peternell,Zube}. 
The offset operation arises frequently in geometric design and manufacturing \cite{DarCyc,DuPratt1}. 
On that account, geometric modeling 
with Dupin cyclides simplifies 
computation of offset surfaces.

It is evidently desirable to distinguish Dupin cyclides among general Darboux cyclides. 
The implied standard recognition procedures involve bringing the implicit equation (\ref{eq:gendarb})
to a known canonical form by M\"obius transformations \cite{Zhao,Bastl14},
or discerning that a geometric characterization is satisfied \cite{DuChand,Kra,Ottens}.
To establish 
a more convenient recognition procedure, we compute the set of algebraic equations 
on the coefficients $a_0,b_1,\dots,f_0$ characterizing Dupin cyclides among the form (\ref{eq:gendarb}).
Our starting point is the following canonical forms of Dupin cyclides under the Euclidean transformations.
A {quartic Dupin cyclide} can be presented by the implicit equation
\begin{equation}\label{eq:dupin} 
	\big(x^2+y^2+z^2+\alpha^2-\gamma^2-\delta^2\big)^2-4(\alpha x-\gamma\delta)^2-4(\alpha^2-\gamma^2)y^2=0,
\end{equation}
after Euclidean translations and rotations \cite[p.223--224]{DuPratt1}. 
We broadly assume that $\alpha^2$, $\gamma^2$, $\delta^2$, $\alpha\gamma\delta\in \RR$,
thereby allowing cyclides without real points along other degenerate cases. 
All degenerate cases are described further in Section \ref{sc:degenerate}.
A {cubic Dupin cyclide} can be brought to the form
\begin{align}\label{eq:canpara}
	2x(x^2+y^2+z^2) - (p +q )x^2 - p y^2-q z^2+\frac{p q }{2}x=0
\end{align}
with $p,q \in \RR$. This differs from the equation on \cite[p.151]{DuPratt2} by scaling of $p,q$ with factor 2.
M\"obius transformations on $\RR^3\cup\{\infty\}$ may convert any Dupin cyclide to a torus,
as we recall in Section \ref{sec:izomobius}.

It is straightforward to normalize the coefficients $b_1,b_2,b_3$ to zero (if \mbox{$a_0\neq 0$}) by Euclidean translations,
but further normalization by orthogonal or M\"obius transformations is cumbersome.
This article presents the computed set of necessary and (generically) sufficient algebraic conditions on $a_0,b_1,\ldots,f_0$ 
so that the equation (\ref{eq:gendarb}) defines a Dupin cyclide. 
The 14 coefficients are viewed as the homogeneous coordinates $(a_0:b_1:\ldots:f_0)$ in the real projective space $\PP^{13}$,
which is identified as the space of Darboux cyclides.
The Dupin cyclides are represented by the projective variety $\DD_0$ in $\PP^{13}$ defined by the found algebraic conditions.
Some of the points on this variety represent degenerations of cyclides to reducible, non-reduced or quadratic surfaces.

To organize the results, the cases of quartic and cubic cyclides are considered separately. 
The subvariety of $\DD_0$ representing quartic Dupin cyclides has $a_0\neq 0$ in (\ref{eq:gendarb}); it will be denoted by $\DD_4$. 
The subvariety of $\DD_0$ representing cubic Dupin cyclides (i.e., 
with $a_0=0$ and $b_1^2+b_2^2+b_3^2 \neq 0$) will be denoted by $\DD_3$.
We are interested only in the real points on those varieties so that the coefficients in (\ref{eq:gendarb}) are real. 
The next section states the main results of this article: 
the algebraic equations that characterize the two main subvarieties $\DD_4$ and $\DD_3$. 
The results are proved in Section \ref{sec:3} (for quartic cyclides) and Section \ref{sec:4} (for cubic cyclides). 


As summarized in Section \ref{sc:climit1}, the co-dimension of the considered spaces of Dupin cyclides 
inside the respective projective spaces of Darboux cyclides equals 4.
In particular, the variety $\DD_0$ 
has dimension 9. 
The main explicit result is presented in Theorem \ref{th:m1}
by underscoring open subvarieties of those varieties that are {\em complete intersections}
in a suitable ambient open subspace of $\PP^{13}$.  
This way of presenting the results should be convenient for practical applications, we suggest.
The results are applied in Section \ref{sc:qrr} to compute an important invariant of Dupin cyclides under the M\"obius transformations.
Additionally, Section \ref{sec:class} classifies the real surfaces defined by the equations for Dupin cyclides, 
including degenerations to a few or no real points. 

\section{The main results}
\label{sec:2}

To present the results in more compact form, these abbreviations are used throughout the article:
\begin{align} \label{eq:r0}
	B_0 = & \; b_1^2+b_2^2+b_3^2,\\
	C_0 = & \; c_1+c_2+c_3,\\ 
	E_0 = & \; e_1^2+e_2^2+e_3^2,\\
	W_1 = &\;  c_1c_2+c_1c_3+c_2c_3-d_1^2-d_2^2-d_3^2,\\
	W_2 = &\; c_1c_2c_3+2d_1d_2d_3-c_1d_1^2-c_2d_2^2-c_3d_3^2, \\
	W_3 = &\; b_1^2c_1 + b_2^2c_2 + b_3^2c_3 + 2b_2b_3d_1 + 2b_1b_3d_2 + 2b_1b_2d_3,\\
	\label{eq:r4}
	W_4 = &\;c_1e_1^2+c_2e_2^2+c_3e_3^2+2d_{1} e_{2} e_{3} + 2d_{2} e_{1} e_{3}  + 2 d_{3} e_{1} e_{2}.
\end{align}
These expressions are symmetric under the permutations of the variables $x,y,z$, 
or equivalently, under the permutations of the indices $1,2,3$. 
We will use several non-symmetric expressions, starting from
\begin{align}\label{eq:gen1}
	K_1 = (c_3-c_2)e_2e_3+d_1(e_2^2-e_3^2)+(d_2e_2-d_3e_3)e_1.
\end{align}
Let $\sigma_{12}$, $\sigma_{13}$, $\sigma_{23}$ be the permutations of the coefficients in (\ref{eq:gendarb})
which permute the indices $1,2$ or $1,3$ or $2,3$, respectively. This allows us to express 
variations of non-symmetric expressions  straightforwardly. In particular,
\begin{align} \label{eq:gen1sa}
	\sigma_{12}K_1 = & \; (c_3-c_1)e_1e_3+d_2(e_1^2-e_3^2)+(d_1e_1-d_3e_3)e_2, \\  \label{eq:gen1sb}
	\sigma_{13}K_1 = & \; (c_1-c_2)e_1e_2+d_3(e_2^2-e_1^2)+(d_2e_2-d_1e_1)e_3.
\end{align}

\subsection{Recognition of quartic Dupin cyclides}

In order to simplify recognition of quartic Dupin cyclides among Darboux cyclides,
we first assume $a_0=1$ in $(\ref{eq:gendarb})$ without loss of generality.
Thereby the ambient space of Darboux cyclides is identified with the affine space $\RR^{13}$ rather than $\PP^{13}$.
Further, we can easily apply the shift
\begin{equation} \textstyle  \label{eq:transl}
	(x,y,z) \mapsto \left( x-\frac12\,b_1, y-\frac12\,b_2, z-\frac12\,b_3\right)
\end{equation}
and eliminate the cubic term $2(b_1x+b_2y+b_3z)\big(x^2+y^2+z^2\big)$. 
Thus the recognition problem simplifies to consideration of cyclides of the form
\begin{align} \label{eq:mainForm}
	\big(x^2+y^2+z^2\big)^2
	&+c_1x^2+c_2y^2+c_3z^2+2d_1yz+2d_2xz+2d_3xy\nonumber\\
	&+2e_1x+2e_{2}y+2e_{3}z+f_{0}  =  0.
\end{align}
One could further apply orthogonal or inversion transformations to bring the quartic equation 
to an even simpler canonical form with $d_1=d_2=d_3=0$, 
but those transformations are cumbersome to calculate. 
Recognition of Dupin cyclides in the form (\ref{eq:mainForm}) is therefore a pivotal practical problem.
The ambient space of Darboux cyclides simplifies accordingly to a 10-dimensional affine space $\RR^{10}$
with the coordinates $c_1,c_2,\ldots,f_0$. We denote by $\DD_4^{*}$ the variety of Dupin cyclides there. 

The variety $\DD_4$ is the orbit of $\DD_4^{*}$ under the easy translations (\ref{eq:transl}). 
The following theorem describes the equations for $\DD_4^*$. The equations for $\DD_4$ are obtained by a straightforward modification of the coefficients in (\ref{eq:mainForm}), as described in Section \ref{sec:whole}.
Beside (\ref{eq:gen1}), we immediately use these polynomials:
\begin{align}\label{eq:gen2}
	L_1 = & \;
	\big(W_1+4f_0-(c_2+c_3)^2-d_2^2-d_3^2\big)e_1  \\
	& \, +\big(C_0d_3+c_3d_3-d_1d_2\big)e_2 +\big(C_0d_2+c_2d_2-d_1d_3\big)e_3, \nonumber  \\
	M_1 = &\;  2(c_1e_1+d_3e_2+d_2e_3)(W_1+4f_0)+e_1(W_2-C_0W_1-4E_0).\label{eq:gen9}
\end{align}
\begin{theorem}\label{th:m1}
	The hypersurface in $\RR^3$ defined by $(\ref{eq:mainForm})$ is a Dupin cyclide 
	if and only if one of the following cases holds:
	\begin{enumerate}[(a)]
		\item $e_1\neq 0$, $\sigma_{12} K_1=0$, $\sigma_{13} K_1=0$, $L_1=0$, $M_1=0$.
		\vspace{2pt}
		\item $e_2\neq 0$, $K_1=0$, $\sigma_{13} K_1=0$, $\sigma_{12} L_1=0$, $\sigma_{12} M_1=0$.
		\vspace{2pt}
		\item $e_3\neq 0$, $K_1=0$, $\sigma_{12} K_1=0$, $\sigma_{13} L_1=0$, $\sigma_{13} M_1=0$.
		\vspace{1pt}
		\item $e_1=e_2=e_3=0$, $W_1+4f_0=0$, $W_2-C_0W_1=0$.
		\item $e_1=e_2=e_3=0$, $C_0 \neq 0$, $\big(4W_1+12f_0-C_0^2\big)^2-16f_0C_0^2=0$,\\[1pt] 
		$(4W_1+12f_0-3C_0^2)(W_1+4f_0)-2C_0(W_2-C_0W_1)=0$.
		\vspace{2pt}
		\item  $e_1=e_2=e_3=0$, $C_0=0$, $W_1+3f_0=0$, $(W_2-C_0W_1)^2-4f_0^3=0$.
	\end{enumerate}
\end{theorem}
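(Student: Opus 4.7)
Under the normalization $a_0=1$ and $b_1=b_2=b_3=0$, the only Euclidean motions preserving the form (\ref{eq:mainForm}) are rotations $R\in\mathrm{SO}(3)$ about the origin. This action makes $(e_1,e_2,e_3)$ a vector and the array with diagonal $(c_1,c_2,c_3)$ and off-diagonal entries $d_1,d_2,d_3$ a symmetric $3\times 3$ tensor, while $f_0$, $C_0$, $E_0$, $W_1$, $W_2$ are rotational invariants. Since the canonical cyclide (\ref{eq:dupin}) depends on three essential parameters $\alpha,\gamma,\delta$, the rotation-invariant variety $\DD_4^{*}$ is expected to have dimension $3+3=6$ in $\RR^{10}$, matching the stated codimension $4$. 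My plan is to (i) verify directly that (\ref{eq:dupin}) satisfies the equations in some case, and (ii) given the equations, rotate to a normal frame and match with (\ref{eq:dupin}).

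\textbf{Necessity.} The canonical form (\ref{eq:dupin}) has $d_1=d_2=d_3=0$, $e_2=e_3=0$, $e_1=4\alpha\gamma\delta$ and explicit quadratic expressions for $c_1,c_2,c_3,f_0$ in $\alpha^2,\gamma^2,\delta^2$. Substituting into $\sigma_{12}K_1$, $\sigma_{13}K_1$, $L_1$, $M_1$ and checking that each vanishes as a polynomial identity in $\alpha,\gamma,\delta$ places the $\alpha\gamma\delta\neq 0$ family in case (a). Rotation invariance of $\DD_4^{*}$ together with the interchange of (a)–(c) under $\sigma_{12},\sigma_{13}$ then accounts for every quartic Dupin cyclide whose $e$-vector is nonzero in (\ref{eq:mainForm}). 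The degenerations with $\alpha\gamma\delta=0$ have $e_1=e_2=e_3=0$ and, by the same substitution test split by the value of $C_0$, fall into (d), (e), or (f).

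\textbf{Sufficiency for case (a).} Suppose $e_1\neq 0$ and $\sigma_{12}K_1=\sigma_{13}K_1=L_1=M_1=0$. Apply the rotation sending $(e_1,e_2,e_3)$ to $(\sqrt{E_0},0,0)$; in this new frame $\sigma_{12}K_1$ collapses to $d_2 e_1^2$ and $\sigma_{13}K_1$ to $-d_3 e_1^2$, forcing $d_2=d_3=0$. A further rotation in the $(y,z)$-plane diagonalizes the remaining $2\times 2$ block and kills $d_1$, reducing the cyclide to
\[
(x^2+y^2+z^2)^2+c_1 x^2+c_2 y^2+c_3 z^2+2 e_1 x+f_0=0,
\]
with two residual scalar equations inherited from $L_1=M_1=0$. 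The key computational step is to invert the map $(\alpha,\gamma,\delta)\mapsto(c_1,c_2,c_3,e_1,f_0)$ induced by (\ref{eq:dupin}): one recovers $\alpha^2,\gamma^2,\delta^2$ as linear combinations of $c_1,c_2,c_3$ and $\alpha\gamma\delta=e_1/4$, then checks that the two residual equations are precisely the compatibility conditions guaranteeing these values come from a single consistent triple with $\alpha^2,\gamma^2,\delta^2\in\RR$. Cases (b) and (c) follow by the index permutations $\sigma_{12},\sigma_{13}$; the degenerate cases (d)–(f) are treated analogously by first diagonalizing the $(c_i,d_i)$ block (rather than aligning the absent $e$-vector) and matching the surviving relations with the canonical degenerations at $\alpha\gamma\delta=0$, split by whether $C_0$ vanishes.

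\textbf{Main obstacle.} The principal difficulty is to verify algebraically that the specific polynomials $L_1$ and $M_1$ are the correct generators of the ideal after the rotation reduction: any weaker set would fail to cut out a complete intersection of codimension $4$, while any stronger set would violate the $\mathrm{SO}(3)$-covariance needed to glue cases (a), (b), (c). This requires direct elimination (or a Gr\"obner basis computation) together with careful attention to the real substructure, since $\alpha^2,\gamma^2,\delta^2$ are permitted to be arbitrary real numbers and the classification in later sections depends on which sign patterns are attainable.
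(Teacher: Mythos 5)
Your overall strategy is the same as the paper's (realize $\DD_4^{*}$ as the $O(3)$-orbit of the canonical form and exploit covariance), but as written there are two genuine gaps, both at the point you yourself flag as the ``main obstacle.'' First, necessity: substituting the canonical form (\ref{eq:dupin}) (where $d_1=d_2=d_3=0$ and $e_2=e_3=0$) into $\sigma_{12}K_1,\sigma_{13}K_1,L_1,M_1$ and checking vanishing only verifies the equations on one slice of the orbit; the index permutations $\sigma_{12},\sigma_{13}$ generate only $S_3\subset O(3)$, not the full rotation group, so this does not show the equations hold for a generic rotation of the canonical form, where all $d_i$ and $e_i$ are nonzero. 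You would need to prove that the tuples $(K_1,K_2,K_3)$, $(L_1,L_2,L_3)$, $(M_1,M_2,M_3)$ are $O(3)$-covariant (this is true for the $K_i$, since $(K_1,K_2,K_3)=e\times(Pe)$ with $P$ the matrix in (\ref{eq:qfmatrices}), but is not established for $L_i,M_i$). The paper avoids this by deriving the equations from a manifestly invariant characterization (equality of characteristic polynomials, the eigenvector condition, and equality of norms, Lemma \ref{th:witha2a3}) followed by elimination of $A_1,A_2,A_3,D,F$. Second, sufficiency: the hypotheses of case \refpart{a} are four specific \emph{non-invariant} polynomials, so after you rotate $(e_1,e_2,e_3)$ to $(\sqrt{E_0},0,0)$ you no longer know that $L_1=M_1=0$ holds in the new coordinates, and the ``two residual scalar equations'' are not available. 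What is actually required --- and what the paper supplies by exhibiting explicit syzygies --- is that on the open set $e_1\neq 0$ the four listed polynomials generate the same ideal as the full twelve-generator ideal $\DI_4^{*}$ of Proposition \ref{th:d0gen}, which \emph{is} rotation-stable; only then can you pass to the normal frame and invert the parametrization as you describe.

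A smaller but still substantive issue is the degenerate locus $e_1=e_2=e_3=0$: the split into \refpart{d} versus \refpart{e}--\refpart{f} is not a split by the value of $C_0$ but a decomposition into two components (roughly $\delta=0$ versus $\alpha=0$ or $\gamma=0$ in the canonical form), of which only the second is further stratified by whether $C_0$ vanishes; identifying these components and showing the first is already the complete intersection $W_1+4f_0=W_2-C_0W_1=0$ requires the ideal computation on that locus, which your sketch does not carry out. In short, the architecture of your argument matches the paper's, but the content of the proof --- the elimination producing the generators and the syzygy/localization argument showing that four of them suffice on each chart --- is precisely what is missing.
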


\begin{example}\rm  \label{rm:torus}
	A prototypical example of a Dupin cyclide is the torus with the minor radius $r$ and the major radius $R$. 
	It is defined by the equation
	\begin{align} \label{eq:torus}
		\big(x^2+y^2+z^2+R^2-r^2\big)^2-4 R^2 ( x^2+y^2)=0.
	\end{align}
	Our main theorem applies with $e_1=e_2=e_3=0$ and
	\[
	c_1=c_2=-2R^2-2r^2,\ c_3=2R^2-2r^2,\ d_1=d_2=d_3=0,\ f_0=(R^2-r^2)^2,
	\]
	Case \refpart{e} applies, as $C_0=-2R^2-6r^2<0$, and
	its last two equalities 
	hold with $W_1=4(R^2+r^2)(3r^2-R^2)$ and $W_2=8(R^2+r^2)^2(R^2-r^2)$.
\end{example}

\begin{remark} \rm \label{rm:main}
	The cases of Theorem \ref{th:m1} 
	define a stratification of the variety $\DD_4^{*}$ into pieces
	that are complete intersections in $\RR^{10}$, possibly of variable dimension. 
	This localization onto complete intersections is our deliberate strategy of presenting a practical
	procedure of recognizing Dupin cyclides. The aim is to check the minimal number 
	of (rather cumbersome) equations for each particular cyclide.
	The co-dimension of $\DD_4^*$ in $\RR^{10}$ turns out to be $4$, 
	hence this minimal number of equations equals $4$.
	
	Concretely, the parts \refpart{a}--\refpart{c} define 3 intersecting open subvarieties 
	of $\DD_4^{*}$ as complete intersections on the Zariski open subsets $e_1\neq 0$, $e_2\neq 0$ and  $e_3\neq 0$ of $\RR^{10}$.
	Only 4 equations are checked in these cases, as the codimension equals 4.
	The cases \refpart{d}--\refpart{f} define subvarieties of $\DD_4^{*}$ of smaller dimensions 
	inside the closed subset $e_1=e_2=e_3=0$ of $\RR^{10}$. 
	There we have two reduced components \refpart{d},  \refpart{e} 
	of dimension 5, and the former is a complete intersection 
	already. The latter component is further stratified into the cases $C_0\neq 0$ and $C_0=0$,
	leading to the concluding complete intersections \refpart{e}, \refpart{f}  of the codimension 5 or 6, respectively.
\end{remark}

\subsection{Recognition of cubic Dupin cyclides}

The general cubic Darboux cyclides have implicit equation of the form
\begin{align} \label{eq:gendarbp}
	& \, 2(b_1x+b_2y+b_3z)\big(x^2+y^2+z^2\big) \nonumber \\
	& +c_1x^2+c_2y^2+c_3z^2+2d_1yz+2d_2xz+2d_3xy \nonumber\\
	& \hspace{89pt} +2e_1x+2e_{2}y+2e_{3}z+f_{0}  =  0.
\end{align}
The ambient space of Darboux cyclides is therefore considered as the real projective space $\PP^{12}$, in which we describe $\DD_3$. To formulate the result for the cubic cyclides, we define the rational expression
\begin{align}
	E_1 = 
	&\;  -\frac{b_1}{B_0} \! \left(\frac{W_3}{B_0}-c_2-c_3\right)^{\!2} 
	+ \frac{2b_1^2}{B_0^2}\,(b_3c_3d_2 + b_2c_2d_3)
	- \frac{4b_1}{B_0^2}\,(b_3d_2 + b_2d_3)^2 \nonumber \\
	&\; + \frac{2(b_3d_2 + b_2d_3)}{B_0^2}\,(b_2^2c_1 + b_3^2c_1-2b_2b_3d_1)
	- \frac{2b_2b_3}{B_0^2}\,(c_2 - c_3)(b_2d_2 - b_3d_3)\nonumber\\
	&\;+\frac{b_1}{B_0}\,\big((c_1 - c_2)(c_1 - c_3)- d_1^2 + d_2^2 + d_3^2\big)
	+\frac{2d_1}{B_0}\,(b_2d_2 + b_3d_3).
\end{align}
\begin{theorem}\label{th:m2}
	The hypersurface in $\RR^3$ defined by $(\ref{eq:gendarbp})$ is a Dupin cyclide if and only if 
	\begin{align}
		e_1= &\, \textstyle
		\frac{1}{4}\,E_1, \qquad e_2=\frac{1}{4}\,\sigma_{12} E_1, \qquad e_3=\frac{1}{4}\,\sigma_{13} E_1,\\[2pt]
		f_0= &\, \frac{W_3}{4B_0^2}\left(\frac{W_3}{B_0}-C_0\right)^{\!2} +\frac{W_3W_1}{4B_0^2}+ \frac{W_2-C_0W_1}{4B_0}.
	\end{align}
\end{theorem}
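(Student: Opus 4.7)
The plan is to parameterise $\DD_3$ explicitly by the orbit of the canonical form $(\ref{eq:canpara})$ under Euclidean transformations and projective rescaling, and then eliminate the transformation parameters to recover the four claimed equations. Since $B_0 \ne 0$, I would first rotate coordinates so that $\hat b := (b_1, b_2, b_3)/\sqrt{B_0}$ becomes $(1, 0, 0)$ and divide the equation by $\sqrt{B_0}$; call this the \emph{aligned frame}, with transformed coefficients $\tilde c_i, \tilde d_i, \tilde e_i, \tilde f_0$. The residual $\mathrm{SO}(2)$ freedom of rotation about the $x$-axis can then be used to diagonalise the quadratic form in $y, z$ and impose the further normalisation $\tilde d_1 = 0$.

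In this normalised aligned frame, the cyclide is Dupin if and only if it coincides with $F(x-a, y-B, z-C) = 0$ for some $a, B, C, p, q \in \RR$, where $F$ is the polynomial $(\ref{eq:canpara})$. A direct expansion of $F(x-a, y-B, z-C)$ produces the linear system
\[
\tilde c_1 = -6a - p - q,\quad \tilde c_2 = -2a - p,\quad \tilde c_3 = -2a - q,\quad \tilde d_2 = -2C,\quad \tilde d_3 = -2B,
\]
which determines $(a, B, C, p, q)$ uniquely from $(\tilde c_1, \tilde c_2, \tilde c_3, \tilde d_2, \tilde d_3)$. Substituting the resulting values back into the remaining four coefficients yields explicit polynomial formulas for $\tilde e_1, \tilde e_2, \tilde e_3, \tilde f_0$ in terms of $\tilde c_i$ and $\tilde d_2, \tilde d_3$.

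It then remains to lift these four identities to the original coordinates. A direct check shows that the aligned-frame specialisations $W_3/B_0 = \tilde c_1$, $C_0 = \tilde c_1 + \tilde c_2 + \tilde c_3$, together with the corresponding reductions of $W_1, W_2$ and of $E_1, \sigma_{12} E_1, \sigma_{13} E_1$, reduce the theorem's four formulas to exactly the identities from the previous step. Each right-hand side is moreover $\mathrm{SO}(2)$-invariant once the $yz$-plane data are grouped into their natural tensor components --- the trace $\tilde c_2 + \tilde c_3$ and determinant $\tilde c_2 \tilde c_3 - \tilde d_1^2$ of the $2\times 2$ quadratic block, together with the planar vectors $(\tilde d_2, \tilde d_3), (\tilde e_2, \tilde e_3), (\tilde b_2, \tilde b_3)$ --- so the identities persist after dropping the normalising condition $\tilde d_1 = 0$. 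Full $\mathrm{SO}(3)$- and scale-equivariance of the construction then transport them to the original coordinates, yielding the stated theorem.

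The main obstacle will be the bookkeeping in this last lifting step, since $E_1$ is a rather elaborate non-symmetric polynomial in $b_i, c_i, d_i$, and verifying that its aligned-frame specialisation reproduces the right-hand side from the canonical-form matching requires careful tracking of how the $yz$-plane tensor components transform under the rotation bringing $\hat b$ onto the $x$-axis. Sufficiency of the four equations then follows from a dimension count: the image of the parameterisation is an irreducible $8$-dimensional variety ($3$ for the rotation, $3$ for the translation, $2$ for $(p, q)$) contained in the subvariety cut out by the four stated equations, which has codimension at most $4$ in $\PP^{12}$; containment therefore forces equality on the open set $B_0 \ne 0$.
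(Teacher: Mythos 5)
Your proposal is correct in substance and reaches the theorem by a route that is recognizably parallel to, but executed differently from, the paper's. The paper keeps the canonical form \eqref{eq:canpara} fixed, applies an \emph{abstract} orthogonal transformation, and characterizes the resulting orbit \eqref{eq:canorthpara} by comparing $O(3)$-invariants (characteristic polynomial and eigenvector relations) and then eliminating $p,q$ by Gr\"obner bases (Lemma \ref{th:orthcubic}); only afterwards is a general translation applied and the shift $(t_1,t_2,t_3)$ solved for from the linear equations $U=\sigma_{12}U=\sigma_{13}U=0$. You instead rotate the \emph{given} cyclide into an aligned frame with $\hat b=(1,0,0)$ and $\tilde d_1=0$, where the Dupin condition becomes literally ``is a translate of \eqref{eq:canpara}''; there the translation and the shape parameters $(p,q)$ are all read off from a single linear solve, with no elimination theory needed. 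What your approach buys is transparency in the rotation step and a cleaner sufficiency argument: since the linear system for $(a,B,C,p,q)$ is solvable for \emph{every} choice of aligned quadratic coefficients, every point of the graph cut out by the four equations is genuinely attained, which is stronger than your closing dimension count (as stated, ``containment forces equality'' of an $8$-dimensional image inside an irreducible $8$-dimensional graph only gives density/equality of Zariski closures, and the orbit is a priori just a constructible set; you should lean on the explicit solvability instead). What it costs is that the entire computational burden is pushed into the final lifting step --- rewriting the aligned-frame identities in $\mathrm{SO}(2)$-equivariant form (invariants $\tilde e_1,\tilde f_0$ and the vector $(\tilde e_2,\tilde e_3)$) and then transporting them by $\mathrm{SO}(3)$- and scale-equivariance to reproduce the specific polynomials $E_1$ and the $f_0$-formula --- which is exactly the bookkeeping the paper discharges by machine computation; your sketch correctly identifies this as the main obstacle but does not carry it out. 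A quick sanity check on the untranslated canonical form ($E_1=pq$, $f_0=0$) confirms your reductions are consistent with the stated formulas.
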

The co-dimension of $\DD_3$ equals 4, and the dimension 
equals 8 within the hyperplane $\PP^{12}\subset \PP^{13}$.
With $B_0\neq 0$, the coefficients $b_1,b_2,\ldots,c_1,\ldots,d_3$ to the cubic and quadratic parts can be chosen freely, 
and then there are unique values for $e_1,e_2,e_3,f_0$ so that (\ref{eq:gendarbp}) defines a Dupin cyclide.
The analogous question for quartic cyclides is considered in Remark \ref{rem:(6:1)}. 

\section{Quartic Dupin cyclides}
\label{sec:3}

In this section we prove Theorem \ref{th:m1} for recognition of quartic Dupin cyclides of the form (\ref{eq:mainForm}). 
The proof refers to Gr\"obner basis computations which were done using computer algebra packages 
{\sf Maple} and  {\sf Singular}. 
But 
we also present constructive ways of obtaining the presented equations from the initial ones.
The initial equations are derived from the well-known canonical form (\ref{eq:dupin}) 
of quartic Dupin cyclides. 
We consider the variety $\DD_4^*$ as the orbit of this canonical form under the orthogonal transformations $O(3)$.
Rather than introducing the orthogonal transformations explicitly and eliminating their parameters,
we compare the invariants under $O(3)$ for the general equation (\ref{eq:mainForm}) and the canonical equation.
This effective comparison is done in Section \ref{sec:ortho4}. 
The coefficients of the canonical form are eliminated in Section \ref{sec:witha}.
Finally, Section \ref{sec:mproof} finds the complete intersection cases of Theorem \ref{th:m1} 
as expounded in Remark \ref{rm:main}.

\subsection{From the canonical form}

We adopt the parametrized description of the quartic equation (\ref{eq:dupin}) for quartic Dupin cyclides 
to the implicit form like (\ref{eq:mainForm}). 
\begin{lemma}
	A quartic Dupin cyclide can be expressed, up to translations and orthogonal transformations in $\RR^3$, 
	to the form
	\begin{align} \label{eq:orthdupin}
		\big(x^2+y^2+z^2\big)^2
		+A_1x^2+A_2y^2+A_3z^2+Dx+F =  0,
	\end{align}
	with the relations
	\begin{align} \label{eq:starta}
		D^2= &\, -(A_2+A_3)(A_1-A_2)(A_1-A_3), \\
		\label{eq:startb}
		4F= &\; A_2^2+A_3^2+A_2A_3-A_1A_2-A_1A_3.
	\end{align}
\end{lemma}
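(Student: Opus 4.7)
The plan is to expand the canonical form (\ref{eq:dupin}) directly, identify the resulting coefficients in the shape (\ref{eq:orthdupin}), and then eliminate the parameters $\alpha,\gamma,\delta$ to obtain the two stated algebraic relations.

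First I would perform the expansion of (\ref{eq:dupin}). The squared factor $(x^2+y^2+z^2+\alpha^2-\gamma^2-\delta^2)^2$ produces $(x^2+y^2+z^2)^2$, the cross term $2(\alpha^2-\gamma^2-\delta^2)(x^2+y^2+z^2)$, and the constant $(\alpha^2-\gamma^2-\delta^2)^2$. The term $-4(\alpha x-\gamma\delta)^2$ contributes $-4\alpha^2 x^2+8\alpha\gamma\delta\,x-4\gamma^2\delta^2$, while $-4(\alpha^2-\gamma^2)y^2$ only affects the coefficient of $y^2$. Crucially, no mixed monomials $yz,\,xz,\,xy$ and no linear terms in $y$ or $z$ are produced, so the equation is already in the shape (\ref{eq:orthdupin}) with
\[
A_1=-2(\alpha^2+\gamma^2+\delta^2),\quad A_2=-2(\alpha^2-\gamma^2+\delta^2),\quad A_3=2(\alpha^2-\gamma^2-\delta^2),
\]
together with $D=8\alpha\gamma\delta$ and $F=(\alpha^2-\gamma^2-\delta^2)^2-4\gamma^2\delta^2$. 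Combined with the cited reduction of any quartic Dupin cyclide to (\ref{eq:dupin}) by Euclidean translations and rotations, this gives the form (\ref{eq:orthdupin}).

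For the relations I would invert the linear system for the $A_i$: two differences and one sum yield $A_1-A_3=-4\alpha^2$, $A_1-A_2=-4\gamma^2$, and $A_2+A_3=-4\delta^2$. Squaring $D$ and substituting gives
\[
D^2=64\,\alpha^2\gamma^2\delta^2=-(A_2+A_3)(A_1-A_2)(A_1-A_3),
\]
which is exactly (\ref{eq:starta}). Using the formula for $A_3$ directly, $\alpha^2-\gamma^2-\delta^2=A_3/2$, so
\[
4F=A_3^2-(A_1-A_2)(A_2+A_3),
\]
and expanding the product rearranges this to $A_2^2+A_3^2+A_2A_3-A_1A_2-A_1A_3$, which is (\ref{eq:startb}).

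I do not expect a genuine obstacle here, as the argument is a direct expansion followed by a two-line elimination. The only delicate point, already flagged by the authors, is that $\alpha,\gamma,\delta$ need not be individually real; the standing assumption $\alpha^2,\gamma^2,\delta^2,\alpha\gamma\delta\in\RR$ is precisely what keeps $A_1,A_2,A_3,D,F$ real, so that the relations (\ref{eq:starta})--(\ref{eq:startb}) remain meaningful over $\RR$ across all the degenerate cases of interest.
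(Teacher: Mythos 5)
Your proposal is correct and follows essentially the same route as the paper: read off $A_1,A_2,A_3,D,F$ in terms of $\alpha^2,\gamma^2,\delta^2,\alpha\gamma\delta$ by expanding the canonical equation, then eliminate the parameters via $A_1-A_3=-4\alpha^2$, $A_1-A_2=-4\gamma^2$, $A_2+A_3=-4\delta^2$ to obtain (\ref{eq:starta})--(\ref{eq:startb}); your explicit elimination and the remark on reality of the coefficients are fine. The only thing the paper adds that you omit is a one-sentence observation that \emph{necessity} of these relations (i.e., that no presentation of a quartic Dupin cyclide in the form (\ref{eq:orthdupin}) can violate them) follows from the absence of non-trivial $O(3)$-symmetries of (\ref{eq:orthdupin}) --- a point slightly beyond the literal existence claim of the lemma but relied on later when the relations are used to characterize $\DD_4^*$.
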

\begin{proof}
	The comparison of (\ref{eq:dupin}) and (\ref{eq:orthdupin}) gives these relations
	\begin{align} 
		& A_1 =  -2(\alpha^2+\gamma^2+\delta^2), \quad
		A_2 = 2(\gamma^2-\alpha^2-\delta^2), \quad
		A_3 =  2(\alpha^2-\gamma^2-\delta^2),  \nonumber\\
		& D =  8\alpha\gamma\delta, \qquad
		F = (\alpha^2-\gamma^2-\delta^2)^2-4\gamma^2\delta^2.
	\end{align}
	We eliminate $\alpha,\gamma,\delta$, and obtain (\ref{eq:starta})--(\ref{eq:startb}).
	Necessity 
	of these relations follows from the fact that there are no non-trivial $O(3)$-symmetries of equation (\ref{eq:orthdupin}).
\end{proof}

\noindent
The canonical form defines a variety of dimension $3=5-2$, as we have 5 coefficients in (\ref{eq:orthdupin}) 
and 2 relations between them. The $O(3)$ action 
adds 3 degrees of freedom, hence the dimension in $\RR^{10}$ has to equal 6.

\begin{remark} \rm
	An inverse map is defined by 
	\begin{equation} \label{eq:greeksq} 
		\alpha=\frac{\sqrt{A_3-A_1}}2, \qquad
		\gamma=\frac{\sqrt{A_2-A_1}}2, \qquad
		\delta=\frac{\sqrt{-A_2-A_3}}2.
	\end{equation}
	Each of these values can be multiplied by $-1$, as  long as $D=8\alpha\gamma\delta$. 
\end{remark}

\begin{remark} \rm
	The cases of Theorem \ref{th:m1}  
	with $e_1=e_2=e_3=0$ are in the orbit of the canonical form (\ref{eq:orthdupin}) with $D=0$. 
	The splitting into the cases \refpart{d} and \refpart{e}--\refpart{f} is consistent with the expression $D=\alpha\gamma\delta$. 
	The canonical form for the case \refpart{d} has $\delta=0$ in (\ref{eq:dupin}), or $A_2+A_3=0$, $D=0$, $F=\frac14A_2^2$ in (\ref{eq:orthdupin}). 
	The canonical form for the case 
	\refpart{e} has either $\alpha=0$, $A_1=A_3$, $F=\frac14A_2^2$, or  $\gamma=0$, $A_1=A_2$, $F=\frac14A_3^2$.
	The canonical form for the case 
	\refpart{f} has more particularly $A_1=A_3$, $A_2=-2A_1$ (or $A_3=-2A_1$), and $F=A_1^2$.
\end{remark}

\subsection{Applying orthogonal transformations}
\label{sec:ortho4}

The direct way to compute the $O(3)$-orbit of the canonical form (\ref{eq:dupin}) 
is  to apply an arbitrary orthogonal $3\times 3$ matrix to the vector $(x,y,z)$ of the indeterminates.
The coefficients would be then parametrized by the 14 variables --- the 5 coefficients in (\ref{eq:orthdupin}), and
the 9 entries of the $3\times 3$ matrix --- restrained by two equations (\ref{eq:starta})--(\ref{eq:startb}) and 
the $6$ orthonormality conditions between the rows on the $3\times 3$ matrix.
The expected dimension of $\DD_4^*$ is thereby confirmed: $6=14-2-6$.
But elimination of the parametrizing variables appears to be too cumbersome even using computer algebra 
systems such as {\sf Maple} and {\sf Singular}. 

Instead of working with the 9 variables of the orthogonal matrix, we identify the $O(3)$-invariants 
for the equations (\ref{eq:mainForm}) and (\ref{eq:orthdupin}).
The group $O(3)$ 
acts on the quadratic and linear parts of these equations disjointly, 
making identification of invariants and their relations easier.
The further elimination of $A_1,A_2,A_3,D,F$ is done in the next section.
\begin{lemma} \label{th:witha2a3}
	\label{thm:orth_relation}
	The hypersurfaces $(\ref{eq:orthdupin})$ and $(\ref{eq:mainForm})$ are related by an orthogonal transformation on $(x,y,z)$
	if and only if these relations hold:
	\begin{align}  \label{eq:chp1}
		A_1+A_2+A_3 = &\; c_1+c_2+c_3,\\ 
		\label{eq:chp2}
		A_1A_2+A_1A_3+A_2A_3= &\; c_1c_2+c_1c_3+c_2c_3-d_1^2-d_2^2-d_3^2,\\ 
		\label{eq:chp3}
		A_1A_2A_3 = &\; c_1c_2c_3+2d_1d_2d_3-c_1d_1^2-c_2d_2^2-c_3d_3^2, \\  
		\label{eq:eigenv1}
		A_1e_1 = &\;  c_1e_1+d_3e_2+d_2e_3,\\  \label{eq:eigenv2}
		A_1e_2= &\; d_3e_1+c_2e_2+d_1e_3,\\  \label{eq:eigenv3}
		A_1e_3 = &\; d_2e_1+d_1e_2+c_3e_3,\\  \label{eq:ee}
		D^2= &\; 4E_0, \\  \label{eq:ff}
		F= &\; f_0.
	\end{align}
\end{lemma}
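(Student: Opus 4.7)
The plan is to decompose the matching conditions by degree in $(x,y,z)$, exploiting that any $R\in O(3)$ preserves $x^2+y^2+z^2$ and therefore acts on the quadratic, linear, and constant parts of (\ref{eq:orthdupin}) and (\ref{eq:mainForm}) independently. I introduce the symmetric quadratic-form matrix
$$C=\begin{pmatrix} c_1 & d_3 & d_2 \\ d_3 & c_2 & d_1 \\ d_2 & d_1 & c_3 \end{pmatrix},$$
the diagonal matrix $D_A=\mathrm{diag}(A_1,A_2,A_3)$, and the linear-part column vector $\vec e=(e_1,e_2,e_3)^T$. Substituting $x\mapsto R^Tx$ in (\ref{eq:orthdupin}) replaces $D_A$ by $RD_AR^T$ and the linear vector $(D,0,0)^T$ by $D$ times the first column of $R$. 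Hence relating the two equations under $O(3)$ is equivalent to the existence of $R\in O(3)$ with $C=RD_AR^T$, $\,2\vec e=D\,R\,(1,0,0)^T$, and $F=f_0$.

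For necessity, the matrix identity $C=RD_AR^T$ asserts orthogonal conjugacy of the real symmetric matrices $C$ and $D_A$, which is equivalent to their sharing the same characteristic polynomial. Matching the trace, the sum of $2\times 2$ principal minors, and the determinant of $C$ with the corresponding elementary symmetric polynomials of $A_1,A_2,A_3$ yields exactly (\ref{eq:chp1})--(\ref{eq:chp3}). Next, $2\vec e$ being $D$ times a column of an orthogonal matrix forces $|2\vec e|^2=D^2$, i.e. (\ref{eq:ee}); and because $CR=RD_A$ makes the first column of $R$ an $A_1$-eigenvector of $C$, the vector $\vec e$ itself must satisfy $C\vec e=A_1\vec e$, which written coordinate-wise is precisely (\ref{eq:eigenv1})--(\ref{eq:eigenv3}). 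The constant term gives (\ref{eq:ff}).

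For sufficiency, I reconstruct $R$ directly from the data. By (\ref{eq:chp1})--(\ref{eq:chp3}) the real symmetric matrix $C$ has spectrum $\{A_1,A_2,A_3\}$ and hence an orthonormal basis of eigenvectors. If $D\ne 0$, condition (\ref{eq:ee}) makes $v_1:=2\vec e/D$ a unit vector, and (\ref{eq:eigenv1})--(\ref{eq:eigenv3}) place $v_1$ in the $A_1$-eigenspace of $C$; take $v_1$ as the first column of $R$ and complete with orthonormal eigenvectors $v_2,v_3$ of $C$ for $A_2$ and $A_3$ chosen in the orthogonal complement. Then $C=RD_AR^T$ and $2\vec e=Dv_1$ hold by construction, and together with (\ref{eq:ff}) the substitution $x\mapsto R^Tx$ converts (\ref{eq:orthdupin}) into (\ref{eq:mainForm}). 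If $D=0$, then (\ref{eq:ee}) forces $\vec e=0$, the eigenvector equations become vacuous, and any orthogonal diagonalization of $C$ provides the required $R$.

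The main subtlety will be the degenerate configurations: when $C$ has a repeated eigenvalue the orthonormal eigenbasis is not unique, and when $\vec e=0$ the first column of $R$ is underdetermined. Neither obstructs the construction, only the uniqueness of $R$. A minor bookkeeping point is that (\ref{eq:ee}) pins $D$ down only up to sign, but the reflection $(x,y,z)\mapsto(-x,y,z)$ lies in $O(3)$, flips only $D$, and absorbs this ambiguity. The whole argument rests on the spectral theorem for real symmetric $3\times 3$ matrices; no Gr\"obner basis work is needed at this stage.
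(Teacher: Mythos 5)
Your proof is correct and follows essentially the same route as the paper: decompose by degree using the $O(3)$-invariance of $x^2+y^2+z^2$, match characteristic polynomials of the two symmetric matrices to get (\ref{eq:chp1})--(\ref{eq:chp3}), read off the eigenvector and norm conditions from the linear part, and equate constants. The one genuine addition is that you spell out the sufficiency direction by explicitly building $R$ from $v_1=2\vec e/D$ and an orthonormal eigenbasis (handling $D=0$ and repeated eigenvalues), whereas the paper's proof only records how an orthogonal transformation acts and leaves the converse implicit in the spectral theorem.
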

\begin{proof}
	An orthogonal transformation acts as follows:
	\begin{itemize}
		\item The highest degree term $(x^2+y^2+z^2)^2$ remains invariant.
		\item The quadratic forms  $c_1x^2+c_2y^2+c_3z^2+2d_1yz+2d_2xz+2d_3xy$
		and $A_1x^2+A_2y^2+A_3z^2$ are related by a conjugation between their symmetric matrices
		\begin{equation} \label{eq:qfmatrices}
			P=\left( \begin{array}{ccc} c_1 & d_3 & d_2 \\ d_3 & c_2 & d_1 \\ d_2 & d_1 & c_3 
			\end{array} \right)
			\qquad \mbox{and} \qquad
			Q=\left( \begin{array}{ccc} A_1 & 0 & 0 \\ 0 & A_2 & 0 \\ 0 & 0 & A_3 
			\end{array} \right).
		\end{equation}
		As is well known, quadratic forms are transformed by the corresponding matrix transformations $P\mapsto M^TPM$.
		Orthogonal matrices satisfy \mbox{$M^T=M^{-1}$,} hence $O(3)$ 
		conjugates the matrix $P$.
		Accordingly, we can compare the characteristic polynomials and obtain
		(\ref{eq:chp1})--(\ref{eq:chp3}).
		\item The linear forms $2e_1x+2e_{2}y+2e_{3}z$ and $Dx$ are related by 
		the same orthogonal transformation $M$ acting on the corresponding vectors 
		$(2e_1,2e_{2},2e_{3})$ and $(D,0,0)$. Their relation to the matrices in (\ref{eq:qfmatrices})
		will be preserved, thus $(e_1,e_{2},e_{3})$ must be an eigenvector of the first matrix
		with the eigenvalue $A_1$. This gives the relations (\ref{eq:eigenv1})--(\ref{eq:eigenv3}).
		Besides, the Euclidean norms of the two vectors will be equal, giving (\ref{eq:ee}).
		\item The constant coefficients will be equal, giving (\ref{eq:ff}). 
	\end{itemize}
\end{proof}

\subsection{Elimination of the coefficients of the canonical form}
\label{sec:witha}

Here we start using the abbreviations (\ref{eq:r0})--(\ref{eq:r4}) and the algebraic language of ideals. 
Let us denote the polynomial ring 
\begin{equation}
	\DR_4^{*}= \RR[c_1,c_2,c_3,d_1,d_2,d_3,e_1,e_2,e_{3},f_{0}].
\end{equation}
The variety $\DD_4^{*}$ 
of Dupin cyclides is defined by the ideal $\DI_4^{*}\subset \DR_4^{*}$ obtained by eliminating $A_1,A_2,A_3,D,F$ 
from the equations (\ref{eq:starta})--(\ref{eq:startb}) and (\ref{eq:chp1})--(\ref{eq:ff}).
As an intermediate step, it is straightforward to eliminate $A_2,A_3,D,F$
and leave only $A_1$ as an auxiliary variable.
\begin{lemma}  \label{th:dgraph}
	The hypersurface $(\ref{eq:mainForm})$ is a Dupin cyclide  
	if and only if there exists $A_1\in\RR$ such that these polynomials vanish:
	\begin{align} 
		G_1 = &\;  -e_1A_1+c_1e_1+d_3e_2+d_2e_3,\label{eq:eigenva1}\\  
		G_2= &\; -e_2A_1+d_3e_1+c_2e_2+d_1e_3,\label{eq:eigenva2}\\  
		G_3 = &\; -e_3A_1+d_2e_1+d_1e_2+c_3e_3,\label{eq:eigenva3}\\ 
		H_1 = & \;  2\,(W_1+4f_0)A_1  + W_2-C_0W_1 - 4 E_0, \label{eq:eqqf1}\\
		H_2 = & \; (C_0^2+4W_1+12f_0)A_1 - C_0^3 +4C_0f_0-4 E_0,\label{eq:eqqf3}\\
		H_3 = & \; A_1^2-2C_0A_1 + C_0^2 - W_1 - 4f_0.\label{eq:eqqf4} 
	\end{align}
\end{lemma}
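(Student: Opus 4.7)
The plan is to algebraically eliminate the auxiliary coefficients $A_2,A_3,D,F$ from the ten relations (3.6)--(3.13) of Lemma \ref{th:witha2a3}, retaining only $A_1$ as a witness, and to verify that the resulting conditions on $c_i,d_i,e_i,f_0$ are exactly the six polynomial equations of the current lemma. The relations $G_1,G_2,G_3$ are (3.9)--(3.11) verbatim, so the work concentrates on producing $H_1,H_2,H_3$.

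For necessity, I introduce the symmetric functions $s=A_2+A_3$ and $p=A_2A_3$. Equations (3.6)--(3.7) give $s=C_0-A_1$ and $p=A_1^2-C_0A_1+W_1$, and substitution into (3.8) yields the characteristic cubic $R(A_1):=A_1^3-C_0A_1^2+W_1A_1-W_2=0$ of the symmetric matrix $P$ from (3.14). Plugging the same expressions for $s,p$ into (3.3) and combining with $F=f_0$ produces $H_3=0$ directly. Substituting into (3.2) rewrites $D^2$ as $(A_1-C_0)(3A_1^2-2C_0A_1+W_1)$; equating this with $4E_0$ from (3.12) and reducing modulo $H_3$ (i.e.\ replacing $A_1^2$ by $2C_0A_1-C_0^2+W_1+4f_0$) delivers $H_2=0$. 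A short manipulation shows $H_2-H_1\equiv R(A_1)\pmod{H_3}$, so $H_1=0$ is then automatic once $H_2,H_3$ and $R(A_1)$ vanish.

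For sufficiency, I assume $A_1\in\RR$ satisfies $G_i=H_j=0$. The congruence $H_2-H_1\equiv R(A_1)\pmod{H_3}$ forces $R(A_1)=0$, so $A_1$ is a real eigenvalue of $P$; the remaining two roots $A_2,A_3$ are automatically real since $P$ is a real symmetric matrix, and Vieta's formulas recover (3.6)--(3.8). Setting $F:=f_0$ and $D:=2\sqrt{E_0}$ (well defined as $E_0\geq 0$) gives (3.12)--(3.13), while reversing the substitutions of the previous paragraph — using $H_3$ for (3.3) and $H_2$ together with $H_3$ for (3.2) — recovers (3.2)--(3.3). The $G_i$ are (3.9)--(3.11). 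Hence all hypotheses of Lemma \ref{th:witha2a3} are met and the surface is a Dupin cyclide.

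The main obstacle is orchestrating the chain of polynomial reductions modulo $H_3$ and $R(A_1)$, and in particular showing transparently that $H_1$ arises as a \emph{dependent} relation — a consequence of $H_2,H_3$ and the characteristic cubic — rather than as an independent constraint. Carrying out the rewriting by hand is a one-page computation in the variable $A_1$; alternatively, a Gr\"obner basis computation in {\sf Maple} or {\sf Singular}, eliminating $A_2,A_3,D,F$ from the ideal generated by (3.6)--(3.13), provides a machine-checked confirmation that $(G_1,G_2,G_3,H_1,H_2,H_3)$ is exactly the resulting elimination ideal. The sign ambiguity in $D=\pm 2\sqrt{E_0}$ is inconsequential, since the canonical form depends on $D$ only through $D^2$.
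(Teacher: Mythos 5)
Your proposal is correct and follows essentially the same route as the paper: the paper's formal proof is a Gr\"obner-basis comparison, but the constructive derivation it gives immediately after Lemma \ref{th:dgraph} is exactly your computation — substituting $A_2+A_3=C_0-A_1$, $A_2A_3=A_1^2-C_0A_1+W_1$ to get the characteristic cubic, $H_3$ and the cubic form of $4E_0$, then reducing modulo $H_3$ to obtain the linear relations $H_1,H_2$. Your explicit treatment of the sufficiency direction (reality of $A_2,A_3$ as eigenvalues of the symmetric matrix $P$, and $D=2\sqrt{E_0}$) is a point the paper leaves implicit, and is a welcome addition.
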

\begin{proof} 
	The given polynomials generate the same ideal in $\DR_4^{*}[A_1]$ 
	as the ideal obtained after an elimination of $A_2,A_3,D,F$ from equations (\ref{eq:chp1}--\ref{eq:ff}). 
	This can be checked by computing and comparing reduced Gr\"obner bases.
\end{proof}

Here is an explicit reversible transformation between the equations of Lemmas \ref{th:witha2a3} and \ref{th:dgraph}.
Equations (\ref{eq:eigenv1})--(\ref{eq:eigenv3}) do not contain the variables $A_2,A_3,D,F$ we eliminate,
so they are copied as $G_1=G_2=G_3=0$.
The other equations are symmetric in $A_2,A_3$. Using (\ref{eq:chp1}), (\ref{eq:chp2}),
equation (\ref{eq:chp3}) becomes $H^*_1=0$ with
\begin{align} \label{eq:charpa}
	H^*_1 = &\; A_1^3-C_0A_1^2+W_1A_1 - W_2.
\end{align}
This is the characteristic polynomial of the first matrix in (\ref{eq:qfmatrices}), of course.
Elimination of $A_2,A_3,D$ from (\ref{eq:starta})--(\ref{eq:startb}) gives
\begin{align*}
	& 4E_0= (A_1-C_0)(W_1-2C_0A_1+3A_1^2).
\end{align*}
We expand this equation to $H^*_2=0$, where 
\begin{align} \label{eq:eq3}
	H^*_2= & \; 3A_1^3-5C_0A_1^2+\big(2C_0^2+W_1\big)A_1-C_0W_1-4E_0.
\end{align}
Equation (\ref{eq:startb}) with eliminated $A_2,A_3,F$ becomes $H_3=0$.
Considering $H^*_1,H^*_2,H_3$ 
as polynomials in $A_1$,  we divide the two cubic 
$H^*_1,H^*_2$ by the quadratic 
$H_3$. The division remainders
\begin{align}
	\widehat{H}_1= & \; H^*_1-(A_1+C_0)H_3, \\
	\widehat{H}_2 = & \; H^*_2-(3A_1+C_0)H_3, \nonumber 
\end{align}
are linear in $A_1$. Indeed,  $\widehat{H}_2=H_2$ and 
\begin{align} \label{eq:eqq1}
	\widehat{H}_1 = &  \; (C_0^2+2W_1+4f_0)A_1  - C_0(C_0^2-W_1-4f_0) - D_0.
\end{align}
We modify $\widehat{H}_1$ to the somewhat simpler $H_1=H_2-\widehat{H}_1$.

Elimination of $A$ from the six equations (\ref{eq:eigenva1})--(\ref{eq:eqqf4}) 
is not complicated, as five of them are linear in $A$. 
\begin{propose} \label{th:d0gen}
	The ideal $\DI_4^*$ specifying Dupin cyclides in $(\ref{eq:mainForm})$ is generated by 
	the following $12$ polynomials:
	\begin{enumerate} 
		\item $K_1$, $K_2=\sigma_{12} K_1$, $K_3=\sigma_{13}K_1$; see $(\ref{eq:gen1})$--$(\ref{eq:gen1sb})$;
		\vspace{3pt}
		\item $L_1$, $L_2=\sigma_{12} L_1$, $L_3=\sigma_{13}L_1$; see $(\ref{eq:gen2})$;
		\vspace{3pt}
		\item $M_1$, $M_2=\sigma_{12} M_1$, $M_3=\sigma_{13}M_1$; see $(\ref{eq:gen9})$;
		\vspace{3pt}
		\item $N_1 = \big(4W_1+12f_0-3 C_0^2\big)(W_1+4f_0) - 2C_0(W_2-C_0W_1-6E_0)-4\,W_4$, \\
		$N_2 = 4(W_2-C_0W_1-2E_0)(W_1+4f_0) +\big(C_0^2-4f_0\big) \big(W_2+C_0W_1+8C_0f_0-4E_0\big)$, \\
		$N_3 =  \big(\, W_2+C_0W_1+8C_0f_0-4E_0\big)^{\!2}-4(W_1+4f_0)^3$.
	\end{enumerate} 
\end{propose}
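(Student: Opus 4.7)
The plan is to realize each of the twelve polynomials as an explicit element of the ideal $\langle G_1,G_2,G_3,H_1,H_2,H_3\rangle\subset\DR_4^*[A_1]$ in which all occurrences of $A_1$ have cancelled, and then to establish the reverse containment by comparing reduced Gr\"obner bases, as the authors indicate at the end of Section \ref{sec:witha}.

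The polynomials $K_i$, $L_i$, $M_i$ arise as successive $A_1$-resultants. Since $G_1,G_2,G_3$ are linear in $A_1$ with coefficients $-e_1,-e_2,-e_3$, a direct expansion gives $e_2G_3-e_3G_2=-K_1$, and $K_2,K_3$ follow by applying $\sigma_{12},\sigma_{13}$. Since $H_1$ is also linear in $A_1$ with coefficient $2(W_1+4f_0)$, the identity
\[
M_1 \;=\; 2(W_1+4f_0)\,G_1 + e_1 H_1
\]
eliminates $A_1$ and produces $M_1$ after expansion. The $L_i$ bring in the quadratic $H_3$: a short calculation confirms
\[
-L_1 \;=\; (A_1+c_1-2C_0)\,G_1 + d_3\,G_2 + d_2\,G_3 + e_1\,H_3,
\]
where the coefficient $c_1-2C_0$ is tuned so that the linear $A_1$-terms cancel after the $A_1^2$ term in $A_1G_1$ is absorbed by $e_1H_3$; applying $\sigma_{12},\sigma_{13}$ gives $L_2,L_3$.

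The polynomials $N_i$ require one further ingredient: the identity
\[
e_1G_1 + e_2G_2 + e_3G_3 \;=\; W_4 - E_0 A_1,
\]
which shows $W_4 = E_0 A_1$ on the variety. One then eliminates $A_1$ among $H_1,H_2,H_3$ and substitutes $W_4$ for $E_0 A_1$ in every instance that appears. Concretely, the Sylvester resultants of $(H_1,H_2)$ and $(H_1,H_3)$ give polynomials that, after such substitution, become $N_2$ and $N_3$; for $N_1$ one multiplies $H_1$ by $A_1$, reduces the resulting $A_1^2$ modulo $H_3$, pairs with $H_2$ to remove the residual $A_1$-factor, and finally replaces $E_0 A_1$ by $W_4$, which produces the $-4W_4$ summand.

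Once each of the twelve polynomials has been placed in $\DI_4^*$, the reverse inclusion is a computational matter: compute a reduced Gr\"obner basis of $\langle G_1,\ldots,H_3\rangle$ in $\DR_4^*[A_1]$ under a lex order with $A_1$ largest, intersect with $\DR_4^*$, and check that every basis element reduces to zero modulo $\langle K_i,L_i,M_i,N_i\rangle$. The main obstacle is the $N_i$-step: raw Sylvester resultants produce polynomials that differ from the stated $N_i$ by ideal multiples of $W_1+4f_0$ and $C_0^2+4W_1+12f_0$, so matching them to the listed normal forms is delicate. The final verification of completeness --- that no further generator is needed beyond these twelve --- rests on the {\sf Singular} or {\sf Maple} computation that the authors acknowledge.
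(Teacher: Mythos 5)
Your proposal is correct and follows essentially the same route as the paper: exhibit each of the twelve polynomials as an explicit $A_1$-free combination of $G_1,G_2,G_3,H_1,H_2,H_3$ (your identities for $L_1$, $M_1$ and $e_1G_1+e_2G_2+e_3G_3=W_4-E_0A_1$ all check out, and the paper likewise obtains $N_2,N_3$ as the resultants of $(H_1,H_2)$ and $(H_1,H_3)$ and $N_1$ from a combination using $A_1H_2$ reduced modulo $H_3$), then delegate the reverse inclusion to a Gr\"obner basis comparison. Two harmless slips: $e_2G_3-e_3G_2$ equals $+K_1$ rather than $-K_1$, and the anticipated discrepancy in the $N_i$-step does not arise --- the Sylvester resultants coincide with $\mp N_2,\mp N_3$ exactly, with no further normalization needed.
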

\begin{proof}  The ideal $\DI_4^*$ is obtained by eliminating $A_1$ from the polynomials 
	\mbox{(\ref{eq:eigenva1})--(\ref{eq:eqqf3}).} 
	Gr\"obner basis comparison shows that the ideal in $\DR_4^*$ 
	generated by the listed 12 polynomials coincides with $\DI_4^*$. 
\end{proof}

The 12 polynomials of this proposition can be derived explicitly from Lemma \ref{th:dgraph} as follows. 
Most straightforwardly, $K_1,K_2,K_3$ are obtained by eliminating $A_1$ 
from the pairs of polynomials in (\ref{eq:eigenv1})--(\ref{eq:eigenv3}).
The polynomial $L_1$ turns up 
as follows: 
\begin{align}
	L_1 = & \; -e_1H_3+(c_1+2c_2+2c_3-A_1)G_1-d_3G_2-d_2G_3. 
\end{align}
The polynomials $L_2=\sigma_{12}L_1$, $L_3=\sigma_{13}L_1$ are obtained similarly.
Further, $M_1$, $M_2$, $M_3$ are obtained by pairing $H_1$ with $G_1$, $G_2$ or $G_3$, and 
eliminating $A_1$. The polynomial $N_1$ is obtained by the combination
\begin{align*}
	N_1= \, -(C_0^2+4W_1+12f_0)H_3+A_1H_2-C_0(H_2+2H_1)-4e_1G_1-4e_2G_2-4e_3G_3.
\end{align*}
It is clear that $N_2$ is the resultant of $H_1$ and $H_2$ with respect to $A_1$. 
The polynomial $N_3$ is the resultant of $H_1$ and $H_3$ with respect to $A_1$. 
Its compact expression is obtained by translating $A_1=\widetilde{A}_1+C_0$ 
so that 
\begin{align*}
	H_3&=\widetilde{A}_1^2-W_1-4f_0,\\
	H_1&=2(\widetilde{A}_1+C_0)(W_1+4f_0)+W_2-C_0W_1-4E_0,
\end{align*}
and by computing the resultant as the determinant of the Sylvester matrix
\[
\left( \begin{array}{ccc}  1 & 0 & -W_1-4f_0 \\
	2W_1+8f_0 & W_2+C_0W_1+8C_0f_0-4E_0  & 0 \\
	0 & 2W_1+8f_0 & W_2+C_0W_1+8C_0f_0-4E_0 \end{array} \right). 
\]

Compared with Proposition \ref{th:d0gen}, our main Theorem \ref{th:m1} specifies pieces of $\DD_4^*$ 
that are complete intersections in $\RR^{10}$ and cover the whole $\DD_4^*$. 
This is explained in Remark \ref{rm:main}. We wrote {\sf Maple} routines for deciding 
whether a given implicit equation (\ref{eq:gendarb}) defines a Dupin cyclide using either Theorem \ref{th:m1}
or Proposition \ref{th:d0gen}. When we tried to recognize a Dupin cyclide with 5 parameters,
the routine that uses Theorem \ref{th:m1} recognized correctly in a few minutes, 
while the other routine took unreasonably longer.

\subsection{Proof of Theorem \ref{th:m1}}
\label{sec:mproof}

We find convenient complete intersection pieces of $\DD_4^*$ 
by investigating the syzygies between the 12 generators of $\DI_4^*$ in Proposition \ref{th:d0gen}.  
The simplest and most frequent factors of found syzygies 
suggest the localizations in 
$\RR^{10}$ where $\DD_4^*$ requires fewer defining equations.
Localization at those factors shrinks 
the set of generators of $\DI_4^*$. 
In particular, we find that the localizations at $e_1$ (or $e_2$, or $e_3$) 
give complete intersections immediately, leading to the cases \refpart{a}--\refpart{c} of Theorem \ref{th:m1}.
The subvariety $e_1=e_2=e_3=0$ turns out to be reducible.
One component is a complete intersection already, giving the case \refpart{d}. 
The other component is additionally stratified to complete intersections by considering whether $C_0=0$.

Here are some simplest syzygies between the $12$ generators in Proposition \ref{th:d0gen}:
\begin{align}
	0 = & \; e_1K_1+e_2K_2+e_3K_3, \\
	e_1L_2-e_2L_1 = &\; d_2K_1+d_1K_2+(c_1+c_2+2c_3)K_3,\\
	e_3L_1-e_1L_3 = &\; d_3K_1+d_1K_3+(c_1+2c_2+c_3)K_2,\\
	e_2L_3-e_3L_2 = &\; d_3K_2+d_2K_3+(2c_1+c_2+c_3)K_1.
\end{align}
Assume that $e_1\neq 0$. From the first 3 syzygies we see that $K_2=0$, $K_3=0$, $L_1=0$ 
imply $K_1=0$, $L_2=0$, $L_3=0$.
Similarly, we have the syzygy
\begin{align}
	\!2e_1 M_2 -  2e_2M_1= &\; (c_2d_2 - c_3d_2 - 2d_1d_3)K_1 - (2c_2d_1 + 2c_3d_1 + d_2d_3)K_2  \nonumber \\
	&-(2c_3^2 + 2d_1^2 + d_2^2 - 8f_0)K_3 + 2(d_3e_1 - c_1e_2 - c_3e_2 + d_1e_3)L_1 \nonumber\\
	&+(2c_2e_1 + 2c_3e_1 - 2d_3e_2 - d_2e_3)L_2 - d_2e_2 L_3,
\end{align}
and the $\sigma_{23}$-symmetric syzygy with $2e_1M_3 - 2e_3M_1$.
Therefore, if we use $M_1=0$, then we have $M_2=0$, $M_3=0$.
There are similar syzygies that express $e_1N_1$, $e_1N_2$ and $e_1N_3$ in terms of $\DR_4^*$-multiples of lower degree generators as well. Therefore, we obtain the case \refpart{a}. By symmetry between $e_1,e_2$ and $e_3$, specialization at $e_2\neq 0$ gives us the case \refpart{b} and specialization at $e_3\neq 0$ gives us the case \refpart{c}.

Let us consider now the degeneration $e_1=e_2=e_3=0$.
Let $\widehat{\DR}_4^*$ denote the polynomial ring $\RR[c_1,c_2,c_3,d_1,d_2,d_3,f_0]$, 
and let $\varphi:\DR_4^*\to \widehat{\DR}_4^*$ denote the specialization map $e_1=e_2=e_3=0$.
Note that the polynomials $K_1,K_2,K_3,L_1,L_2,L_3,M_1,M_2,M_3$ vanish in $\widehat{\DR}_4^*$ and the image ideal $\varphi(\DI_4^*)$ is generated by $\varphi(N_1),\varphi(N_2)$ and $\varphi(N_3)$. The product $Y_0(W_1+4f_0)$ belongs to the ideal $\varphi(\DI_4^*)$ 
since
\begin{align} 
	Y_0(W_1+4f_0) = (C_0^2+4W_1+12f_0)\,\varphi(N_1)+2C_0\,\varphi(N_2).
\end{align}
If $Y_0\neq 0$, the ideal $\varphi(\DI_4^*)\subset \widehat{\DR}_4^*[Y_0^{-1}]$ is generated by $W_1+4f_0$ and $W_2-C_0W_1$. The option \refpart{d} then follows. Assume that $W_1+4f_0\neq 0$. One can check that the ideal $\varphi(\DI_4^*)$ in $\widehat{\DR}_4^*[(W_1+4f_0)^{-1}]$ 
is generated by $Y_0$, $Y_1$, $Y_2$, $Y_3$, where
\begin{align}
	Y_0 = &\; \big(4W_1+12f_0-C_0^2\big)^2-16f_0C_0^2,\\
	Y_1 = &\; (4W_1+12f_0-3C_0^2)(W_1+4f_0)-2C_0(W_2-C_0W_1),\\
	Y_2 = &\; (W_2-C_0W_1)\big(C_0^2-4W_1-4f_0)-8W_2(W_1+4f_0),\\
	Y_3 = &\; (C_0W_1+9W_2)^2-4W_1^3-4W_2(C_0^3+27W_2).
\end{align} 
Here are two syzygies between them:
\begin{align} 
	-2C_0Y_2 &= 3Y_0(W_1+4f_0)+(C_0^2-12W_1-36f_0)Y_1,\\
	-2C_0Y_3 &= (W_2-C_0W_1-8W_3)(Y_0-3Y_1)-(C_0^2-3W_1)Y_2.
\end{align} 
So the localization with $C_0\neq 0$ gives a complete intersection 
generated by $Y_0$ and $Y_1$, giving us the option \refpart{e}.
If $C_0=0$, then we reduce $Y_0$ to $3f_0+W_1$. After the elimination of $f_0$ with
\begin{equation}
	f_0=\frac13\left(c_1^2-c_2c_3+d_1^2+d_2^2+d_3^2\right)
\end{equation}
we obtain an ideal generated by one element.
We recognize such element compactly as $(W_2-C_0W_1)^2-4f_0^3$ or $(W_2-C_0W_1)^2-4(W_1+4f_0)^3$,
and conclude the last option \refpart{f}. 
It is left to track the case $W_1+4f_0=0$. We have again the syzygy:
\begin{equation} 
	(W_2-C_0W_1)^2=\varphi(N_3)+4\left((W_1+4f_0)^2+C_0W_3-2C_0^2f_0\right)(W_1+4f_0).
\end{equation} 
So we have the ideals inclusion $(W_1+4f_0, W_2-C_0W_1)\subset \varphi(\DI_4^*)+(W_1+4f_0)$. This case is therefore subsumed by \refpart{d}. 

\section{Cubic Dupin cyclides}
\label{sec:4}

Here we prove Theorem  \ref{th:m2}, which characterizes cubic
(also called {\em parabolic} \cite{DuChand}) Dupin cyclides
in the space $\DD_3$ of cubic Darboux cyclides (\ref{eq:gendarb}) with $a_0=0$.
We compute the ideal defining $\DD_3$ as the orbit of a canonical form (\ref{eq:canpara})
of cubic Dupin cyclides under orthogonal transformations and translations.
The $O(3)$-orbit of (\ref{eq:canpara}) 
is computed in  Section \ref{sec:ortho3}, following the same strategy as in Section \ref{sec:ortho4}. 
Theorem \ref{th:m2} is proved in Section \ref{sec:tr3} after applying general translations in $\RR^3$.

\subsection{Applying orthogonal transformations}
\label{sec:ortho3}

Applying an orthogonal transformation to our initial canonical form (\ref{eq:canpara}) 
gives us an intermediate 
form
\begin{align}\label{eq:canorthpara}
	2(b_1x+b_2y+b_3z)\big(x^2+y^2+z^2\big) +\hat c_1x^2+ \hat c_2y^2+\hat c_3z^2 \\
	+2\hat d_1yz+2\hat d_2xz+2\hat d_3xy+2\hat e_1x+2\hat e_2y+2\hat e_3z & =  0. \nonumber
\end{align}
of cubic 
Dupin cyclides. To define the set of generating relations between the coefficients here, 
let us define the polynomials: 
\begin{align} \label{eq:wh1}
	U & = \,  b_1(\hat c_1-\hat c_2-\hat c_3) + 2b_2\hat d_3+2b_3\hat d_2, \\  \label{eq:wh3}
	V &= {\hat c_1}^{\,2}+{\hat c_2}^{\,2}+{\hat c_3}^{\,2}-2\hat c_1\hat c_2-2\hat c_1\hat c_3-2\hat c_2\hat c_3
	+4{\hat d_1}^{\,2}+4{\hat d_2}^{\,2}+4{\hat d_3}^{\,2}.
\end{align}
Recall (\ref{eq:r0}) that we denote $B_0=b_1^2+b_2^2+b_3^2$ .
\begin{lemma}\label{th:orthcubic}
	The cyclide equation $(\ref{eq:canorthpara})$ 
	can be obtained from $(\ref{eq:canpara})$ 
	by an orthogonal transformation on $(x,y,z)$ if and only if these polynomial expressions evaluate to $0$:
	\begin{equation} \label{eq:cbot10}
		B_0-1, \quad U,\quad \sigma_{12}U, \quad\sigma_{13}U, \quad
		16e_1+Vb_1,\quad 16e_2+Vb_2, \quad 16e_3+Vb_3.
	\end{equation}
\end{lemma}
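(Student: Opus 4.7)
The plan is to mirror the strategy of Lemma~\ref{thm:orth_relation}: apply an arbitrary orthogonal $M \in O(3)$ to the canonical cubic cyclide (\ref{eq:canpara}) and compare the cubic, quadratic, and linear parts of the result with those of (\ref{eq:canorthpara}) term by term. Because each homogeneous piece transforms separately under $O(3)$, the relations in (\ref{eq:cbot10}) will naturally split into three groups, one per degree.

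First, since $x^2+y^2+z^2$ is $O(3)$-invariant, the cubic part sends $2x$ to $2(b_1x+b_2y+b_3z)$, with $(b_1,b_2,b_3)$ the first row of $M$; orthonormality yields $B_0-1=0$. Next, the quadratic-form matrix $P$ of (\ref{eq:canorthpara}) is conjugate to $D=\mathrm{diag}(-(p+q),-p,-q)$ via $P=M^T D M$, so $b$ (read as a column) is a $P$-eigenvector with eigenvalue $-(p+q)$. Taking traces identifies this eigenvalue as $C_0/2$, and expanding $(P-\tfrac{C_0}{2}I)b=0$ componentwise, after clearing the factor $\tfrac12$, recovers row by row exactly the three equations $U=0$, $\sigma_{12}U=0$, $\sigma_{13}U=0$.

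For the linear part, the substitution sends $\tfrac{pq}{2}x$ to $\tfrac{pq}{2}(b_1x+b_2y+b_3z)$, so $\hat e_i=\tfrac{pq}{4}b_i$. To turn this into an intrinsic condition on the coefficients of (\ref{eq:canorthpara}), I would express $pq$ via $O(3)$-invariants of $P$: writing the elementary symmetric functions of the eigenvalues $-(p+q),-p,-q$ in terms of $\hat c_i,\hat d_i$ and substituting into the defining expression of $V$ should give the clean identity $V=-4pq$. The remaining three conditions $16\hat e_i+Vb_i=0$ then encode exactly $\hat e_i=\tfrac{pq}{4}b_i$, completing the necessity direction.

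For sufficiency, I would invert the construction. Real symmetry of $P$ guarantees an orthonormal eigenbasis with real eigenvalues $\lambda_1,\lambda_2,\lambda_3$. The $U$-equations together with $B_0=1$ identify $b$ as a unit eigenvector with $\lambda_1=C_0/2$, and the trace relation $\lambda_1+\lambda_2+\lambda_3=C_0$ forces $\lambda_2+\lambda_3=C_0/2$. Setting $p:=-\lambda_2$, $q:=-\lambda_3$ and extending $b$ to an orthonormal basis of $P$-eigenvectors produces an $M\in O(3)$ that maps the quadratic and cubic parts of (\ref{eq:canpara}) to those of (\ref{eq:canorthpara}); the linear parts then match by $V=-4pq$ and $16\hat e_i+Vb_i=0$. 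The main obstacle I anticipate is the quantitative identity $V=-4pq$; the rest of the argument is either structural (orthogonal diagonalization of real symmetric matrices) or routine verification once this identity is in hand.
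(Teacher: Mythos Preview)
Your argument is correct and follows the same decomposition into homogeneous pieces as the paper, but your actual derivation is the elementary constructive one rather than the paper's formal proof. The paper writes down the proportionality $(\hat e_1,\hat e_2,\hat e_3)=\tfrac{pq}{4}(b_1,b_2,b_3)$, the three characteristic-polynomial identities, and the three eigenvector equations with eigenvalue $-(p+q)$, and then simply appeals to a Gr\"obner basis computation to eliminate $p,q$ and match the resulting ideal with the one generated by (\ref{eq:cbot10}). Your route bypasses the computer-algebra step entirely: you read off the eigenvalue $-(p+q)=C_0/2$ from the trace, recognise $U,\sigma_{12}U,\sigma_{13}U$ directly as the components of $(P-\tfrac{C_0}{2}I)b$, and identify $V=(\operatorname{tr}P)^2-4\!\cdot\!(\text{second symmetric function of eigenvalues})=-4pq$ by an explicit invariant calculation. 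The paper in fact sketches exactly this after its proof as a ``constructive'' remark, so your argument is essentially that remark promoted to a full proof. Your explicit sufficiency direction (diagonalising $P$, setting $p=-\lambda_2$, $q=-\lambda_3$, and building $M$) is something the paper does not spell out; it is a genuine addition and works because $P$ is real symmetric, so $\lambda_2,\lambda_3\in\RR$ automatically. The identity $V=-4pq$ you flag as the main obstacle is a two-line computation once $V$ is rewritten as $(\hat c_1+\hat c_2+\hat c_3)^2-4(\hat c_1\hat c_2+\hat c_1\hat c_3+\hat c_2\hat c_3-\hat d_1^{\,2}-\hat d_2^{\,2}-\hat d_3^{\,2})$.
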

\begin{proof} 
	As in the proof of Lemma \ref{thm:orth_relation}, 
	we consider the $O(3)$ action 
	in each homogeneous part. Clearly, $B_0=1$. 
	The cubic and linear part are proportional:
	\begin{equation} \label{eq:lincu}
		(\hat e_1,\hat e_2,\hat e_3) = \frac{p q }4 \, (b_1,b_2,b_3).
	\end{equation}
	We obtain the following equations from comparison of the quadratic parts and the eigenvector role of $(b_1,b_2,b_3)$:
	\begin{align} \label{eq:ccu1}
		\hat c_1+\hat c_2+\hat c_3&= -2(p +q ), \\  \label{eq:ccu2}
		\hat c_1\hat c_2+\hat c_1\hat c_3+\hat c_2\hat c_3-{\hat d_1}^{\,2}-{\hat d_2}^{\,2}-{\hat d_3}^{\,2} &= (p +q )^2 + p q ,\\
		\label{eq:ccu3} \hat c_1\hat c_2\hat c_3 + 2\hat d_1\hat d_2\hat d_3 
		- \hat c_1{\hat d_1}^{\,2} - \hat c_2{\hat d_2}^{\,2} - \hat c_3{\hat d_3}^{\,2} &= -p q (p +q ),\\
		\label{eq:ccu4} b_1\hat c_1 + b_2\hat d_3 + b_3\hat d_2&=-b_1(p +q ), \\
		\label{eq:ccu5} b_1\hat d_3 + b_2\hat c_2 + b_3\hat d_1&=-b_2(p +q ), \\
		\label{eq:ccu6} b_1\hat d_2 + b_2\hat d_1 + b_3\hat c_3&=-b_3(p +q ).
	\end{align} 
	This system is similar to (\ref{eq:chp1})--(\ref{eq:eigenv3}). Computation and comparison of 
	Gr\"obner bases with respect to the same ordering shows that 
	elimination of $p $, $q $ gives the ideal generated by the polynomials (\ref{eq:cbot10}).
\end{proof}

Constructively, the equations $U=0$, $\sigma_{12}U=0$,  $\sigma_{13}U=0$
are obtained by eliminating $p+q $ in (\ref{eq:ccu1}) and (\ref{eq:ccu4})--(\ref{eq:ccu6}).
From (\ref{eq:ccu2}) we obtain $4p q =-V$. 
The equations for $e_1,e_2,e_3$ then follow from (\ref{eq:lincu}).
The proportionality in (\ref{eq:lincu}) gives the simple equations $b_1\hat e_2=b_2\hat e_1$, 
$b_1\hat e_3=b_3\hat e_1$, $b_2\hat e_3=b_3\hat e_2$.
A Gr\"obner basis with respect to a total degree ordering shows a few more vanishing polynomials of degree 2:
$\ \hat c_1^{\,2}  - (\hat c_2 - \hat c_3)^2 - 4 \hat d_1^{\,2} - 16 b_1\hat e_1$, 
$\ (\hat c_1 - \hat c_2 - \hat c_3)\hat d_1 - 2\hat d_2\hat d_3 + 8b_3\hat e_2$,
$\ (\hat c_1-\hat c_2-\hat c_3)\hat e_1 + 2\hat d_3\hat e_2+2\hat d_2\hat e_3$,
and the $\sigma_{12}/\sigma_{13}$-variants.

\subsection{Proof of Theorem 2.3} 
\label{sec:tr3}

The general form (\ref{eq:gendarbp}) of cubic Darboux cyclides is obtained by applying an arbitrary shift 
\begin{align} \label{eq:tranl3}
(x,y,z)\longmapsto (x+t_1,y+t_2,z+t_3)
\end{align} 
to the form (\ref{eq:canorthpara}), 
up to multiplication of 
(\ref{eq:gendarbp})  by a scalar.
We still assume $B_0=1$ in the computations, and then homogenize the expressions by inserting the powers of 
$B_0$ to match the degrees of monomials.
The shift (\ref{eq:tranl3}) leads to these identification relations between 
the coefficients of  (\ref{eq:gendarbp}) and (\ref{eq:canorthpara}):
\begin{align}\label{eq:sub1}
c_1= & \, \hat c_1+6b_1t_1+2b_2t_2+2b_3t_3, \\
d_1= & \, \hat d_1+2b_2t_3+2b_3t_2,\\
e_1= & \, \hat e_1+b_1(3t_1^2+t_2^2+t_3^2)+2b_2t_1t_2+2b_3t_1t_3+\hat c_1t_1+\hat d_3t_2+\hat d_2t_3,\\
f_0= & \, 2(b_1t_1+b_2t_2+b_3t_3)(t_1^2+t_2^2+t_3^2) + \hat c_1t_1^2 + \hat c_2t_2^2 + \hat c_3t_3^2 \nonumber\\
& + 2\hat d_3t_1t_2 + 2\hat d_2t_1t_3 + 2\hat d_1t_2t_3 + 2\hat e_1t_1 + 2\hat e_2t_2 + 2\hat e_3t_3.\label{eq:sub2}
\end{align}
The expressions for $c_2,c_3,d_2,d_3,e_2,e_3$ are obtained by the symmetries $\sigma_{12},\sigma_{13}$.
Up to the homogenization, the space $\DD_3$ is defined by the ideal generated by these relations 
and the polynomials of Lemma \ref{th:orthcubic}. 
Elimination of the coefficients  $\hat c_1,\dots,\hat d_1,\ldots,\hat e_3$ is straightforward.
The accordingly modified equations $U=0$, $\sigma_{12}U=0$, $\sigma_{13}U=0$
are linear in $t_1,t_2,t_3$ with the discriminant $B_0^2$.
We solve in the non-homogeneous form (i.e., keeping $B_0=1$):
\begin{equation}
t_1= \frac{-b_1c_2-b_1c_3+b_2d_3+b_3d_2+b_1W_3}{2},
\end{equation}
and the respective $\sigma_{12}$, $\sigma_{13}$ modifications for expressions for $t_2,t_3$. 
Now we an express $f_0$ using (\ref{eq:sub2}), and $e_1$, $e_2$, $e_3$ using the last 3 equations in (\ref{eq:cbot10}).

\section{The whole space of Dupin cyclides}
\label{sec:whole}

It is useful 
to compute the projective closure of the variety $\DD_4\subset\RR^{13}$ of quartic Dupin cyclides. 
If this closure contains the variety $\DD_3$ of cubic Dupin cyclides as a component at $a_0=0$,
it is natural to define the whole space $\DD_0$ of Dupin cyclides as this Zariski closure in $\PP^{13}$.
In Section \ref{sc:climit} we indeed conclude that $\DD_3$ is contained in the closure.
As it turns out, the infinite limit $a_0=0$ includes also reducible components with $b_1^2+b_2^2+b_3^2=0$.
We discard the components with complex (rather than all real) points 
\mbox{$(b_1:b_2:\cdots:f_0)\in\PP^{12}\subset\PP^{13}$,}
and describe the quadratic limit surfaces in Remark \ref{rm:quadr}.
The geometric characteristics such as the dimension, the degree and the Hilbert series 
of $\DD_0$, $\DD_4^*$ and $\DD_3$ are presented in  Section \ref{sc:climit1}.

\subsection{Cubic cyclides as limits of quartic cyclides}
\label{sc:climit}

An alternative way to obtain the variety $\DD_3$ of cubic Dupin cyclides is to consider the projective limit $a_0\to 0$ 
of the variety $\DD_4$ of quartic cyclides. The latter variety 
is the restriction $a_0=1$ of the whole space $\DD_0$ of Dupin cyclides.
This projective variety $\DD_0$ is defined by homogenizing the vanishing polynomials for $\DD_4$ with $a_0$.
Taking $a_0=0$ in $\DD_0$ gives a limiting variety that we identify with $\DD_3$ after throwing out complex components.
The general picture of the introduced varieties of Dupin cyclides and the ambient spaces is depicted in Figure \ref{fig:diagram}.
\begin{figure}   
\begin{tikzcd}
	& & \boxed{\PP^{13}} \!\! \arrow[dll,dash] \!\arrow[d,dash] \arrow[drr,dash]& & & \\
	\! \boxed{\PP^{12} \!\, (a_0\!=\!0)} \arrow[dr,dash] \! & & \! \boxed{\DD_0} \! \arrow[dr,dash] \!\arrow[dl,dash] 
	& & \boxed{\RR^{13} \!\, (a_0\!=\!1)} \!\!\arrow[dl,dash] \!\arrow[dr,dash]& \\
	& \boxed{\DD_3} & & \boxed{\DD_4} \!\!\arrow[dr,dash] & & \!\!\boxed{\RR^{10}} \!\arrow[dl,dash] \\
	& & & & \boxed{\DD_4^{*}}& 
\end{tikzcd}
\caption{The inclusion diagram for the varieties 
	of Dupin cyclides embedded in the spaces of Darboux cyclides.}
\label{fig:diagram}
\end{figure}
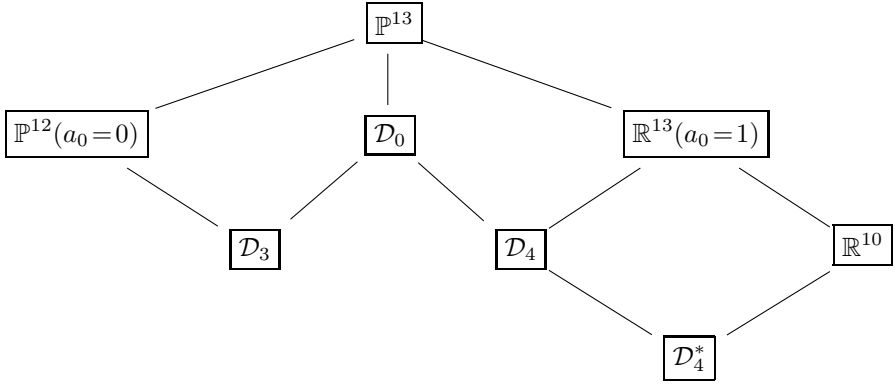

The ideal in $\RR[b_1,b_2,b_3,c_1,c_2,c_3,d_1,d_2,d_3,e_1,e_2,e_3,f_0]$
of the variety $\DD_4$ 
is obtained from our main results  on $\DD^*_4$ by employing 
the normalizing shift (\ref{eq:transl}). 
This shift transforms the general equation (\ref{eq:gendarb}) for Darboux cyclides to
\begin{align} \label{eq:edarb}
\! \big(x^2+y^2+z^2\big)^2 & + \! \left(c_1-b_1^2-\frac{B_0}{2}\right) \! x^2 
+ \! \left(c_2-b_2^2-\frac{B_0}{2}\right) \! y^2 + \! \left(c_3-b_3^2-\frac{B_0}{2}\right) \! z^2 \nonumber \\
& +2\,(d_1-b_{2} b_{3})\,yz+2\,(d_2-b_{1} b_{3})\,xz+2\,(d_3-b_{1} b_{2})\,xy  \nonumber  \\[2pt]
& +2\left(e_1+\frac{b_1\,(B_0-c_1)-b_2d_3-b_3d_2}{2}\right)x  \nonumber \\
& +2\left(e_2+\frac{b_2\,(B_0-c_2)-b_1d_3-b_3d_1}{2}\right)y \\
& +2\left(e_3+\frac{b_3\,(B_0-c_3)-b_1d_2-b_2d_1}{2}\right)z  \nonumber \\
& +f_{0}- \frac{3B_0^2}{16}  + \frac{W_3}{4} - b_1 e_1 - b_2 e_2 - b_3 e_3 \; = \; 0. \nonumber 
\end{align}
Comparing the coefficients here with those in (\ref{eq:mainForm}), 
we modify the equations for the variety $\DD^*_4$ in Proposition \ref{th:d0gen} or Theorem \ref{th:m1}, 
and obtain the defining equations for  the space $\DD_4$. 

Moving towards $\DD_3$ in Figure \ref{fig:diagram}, 
the homogenized ideal for $\DD_0$ is specified using the following standard result.
\begin{propose}
\label{thrm:David-Cox}
Let $I$ be an ideal of the polynomial ring $k[x_1,\dots,x_n]$ over a field $k$, 
and let $\{g_1,\dots,g_t\}$ be a Gr\"obner basis for $I$ with respect to a graded monomial ordering 
in $k[x_1,\dots,x_n]$. Denote by $f^h\in k[x_0,\dots,x_n]$ the homogenization of a polynomial $f\in k[x_1,\dots,x_n]$ 
with respect to the variable $x_0$. Then $\{g_1^h,\dots,g_t^h\}$ is a Gr\"obner basis 
for the homogenized ideal $I^h=\left(f^h\mid f\in I\right)\subset k[x_0,\dots,x_n]$. 
\end{propose}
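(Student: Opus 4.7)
The plan is to extend the given graded monomial ordering on $k[x_1,\dots,x_n]$ to a monomial ordering on $k[x_0,\dots,x_n]$ in such a way that homogenization preserves leading monomials. Concretely, I would compare monomials $x_0^{a_0}x_1^{a_1}\cdots x_n^{a_n}$ first by the total degree $a_1+\cdots+a_n$ (ignoring $x_0$), breaking ties by the original ordering applied to $x_1^{a_1}\cdots x_n^{a_n}$, and breaking any remaining ties by the exponent $a_0$. Because the original ordering is graded, the leading monomial of any $f\in k[x_1,\dots,x_n]$ sits in the top-degree part of $f$ and therefore appears in the homogenization $f^h$ with $x_0$-exponent $0$; under the extended ordering this forces $\mathrm{LT}(f^h)=\mathrm{LT}(f)$. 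In particular $\mathrm{LT}(g_i^h)=\mathrm{LT}(g_i)$ for every basis element.

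The next step is to deduce the Gröbner-basis property of $\{g_1^h,\dots,g_t^h\}$ in $I^h$ from the assumed one of $\{g_1,\dots,g_t\}$ in $I$. Since $I^h$ is a homogeneous ideal, it suffices to treat a homogeneous element $F\in I^h$ of degree $d$ and show that $\mathrm{LT}(F)$ is divisible by some $\mathrm{LT}(g_j^h)$. I would dehomogenize to $f=F(1,x_1,\dots,x_n)$ and verify $f\in I$ by writing $F=\sum h_i f_i^h$ with $f_i\in I$ (the generating description of $I^h$) and then setting $x_0=1$. Comparing degrees gives $F=x_0^{d-\deg f}f^h$, so that $\mathrm{LT}(F)=x_0^{d-\deg f}\,\mathrm{LT}(f)$. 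The Gröbner-basis hypothesis in $k[x_1,\dots,x_n]$ then supplies $\mathrm{LT}(f)=m\,\mathrm{LT}(g_j)$ for some $j$ and some monomial $m$, whence $\mathrm{LT}(F)=(x_0^{d-\deg f}m)\,\mathrm{LT}(g_j^h)$, as required.

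The step that I expect to demand the most care is the identification $F=x_0^{d-\deg f}f^h$ with $f\in I$; it encodes the compatibility between the two natural descriptions of $I^h$, namely the ideal generated by $\{f^h:f\in I\}$ on the one hand, and the set of homogeneous polynomials whose dehomogenizations lie in $I$ on the other. Once this compatibility is in hand, the rest is bookkeeping: the membership $g_i^h\in I^h$ is automatic from $g_i\in I$, and the standard fact that leading-term generation implies ideal generation closes out the claim that $\{g_1^h,\dots,g_t^h\}$ is indeed a Gröbner basis of $I^h$ with respect to the extended ordering.
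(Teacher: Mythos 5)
Your proof is correct, and it is essentially the textbook argument: the paper itself gives no proof but simply cites Theorem~4 of \cite[\S 8.4]{IdealsVars}, whose proof proceeds exactly as you do --- extend the order so that (using gradedness) $\mathrm{LT}(f^h)=\mathrm{LT}(f)$, reduce to homogeneous $F\in I^h$, and use the identity $F=x_0^{d-\deg f}f^h$ with $f=F(1,x_1,\dots,x_n)\in I$. The only point worth making explicit is the reduction to homogeneous $F$, which holds because the homogeneous components of any element of the homogeneous ideal $I^h$ again lie in $I^h$ and one of them carries the leading term.
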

\begin{proof} 
This is Theorem 4 in \cite[\S 8.4]{IdealsVars}.
\end{proof}

We used {\sf Singular} computations with respect to the total degree monomial ordering \\
$grevlex(b_1,b_2,b_3,...,f_0)$ \cite[pg.~52]{IdealsVars}. 
The computed Gr\"obner basis for $\DD_0$ has 530 elements; 
the computation took about an hour on {\sf Singular}.
After the homogenization with $a_0$ and setting $a_0=0$,
we get a reducible variety, where some components (possibly one)
are restricted by $B_0=0$.
We ignore these components by assuming $B_0=1$ additionally. 
Then the Gr\"obner basis with respect to 
$grevlex(b_1,b_2,b_3,...,f_0)$ has $321$ elements.
Elimination of $e_1,e_2,e_3,f_0$ leads to the expressions of Theorem \ref{th:m2} with $B_0=1$,
without any relation between the other coefficients of (\ref{eq:gendarbp}).
This completes the alternative way of obtaining the ideal for $\DD_3$.

\begin{remark}\rm \label{rm:quadr}
It is interesting to see what quadratic surfaces with $a_0=b_1=b_2=b_3=0$ in (\ref{eq:gendarb})
are contained in 
the variety $\DD_0$ as Dupin cyclides.
After the substitution $a_0=b_1=b_2=b_3=0$ in the Gr\"obner basis with 530 elements, 
we obtain a reducible variety with two components of codimension 2 in $\PP^{9}\subset\PP^{13}$ (over $\CC$). 
One component is defined by vanishing 
of two polynomials: the discriminant of the characteristic polynomial of the matrix $P$ in (\ref{eq:qfmatrices}),
and the determinant of this extended matrix: 
\begin{align}
	\widehat{P} =
	\begin{pmatrix}
		c_1&d_3&d_2&e_1\\
		d_3&c_2&d_1&e_2\\
		d_2&d_1&c_3&e_3\\
		e_1&e_2&e_3&f_0
	\end{pmatrix}.
\end{align} 
The discriminant equals this sum of squares:
\begin{equation}
	S_0^2+S_1^2+\sigma_{12}S_1^2+\sigma_{13}S_1^2
	+15T_1^{\,2}+15\sigma_{12}T_1^{\,2}+15\sigma_{13}T_1^{\,2},
\end{equation}
where
\begin{align*}
	S_0 & = (c_3-c_2)d_1^2+(c_1-c_3)d_2^2+(c_2-c_1)d_3^2+(c_1-c_2)(c_1-c_3)(c_2-c_3), \\
	S_1 & = d_1(d_2^2+d_3^2-2d_1^2)+(c_2+c_3-2c_1)d_2d_3+2(c_2-c_1)(c_3-c_1)d_1, \\
	T_1 & = d_1(d_2^2-d_3^2)+(c_2-c_3)d_2d_3.
\end{align*}
The real points of this component are therefore defined by the ideal generated by the 7 cubic polynomials
$S_0,S_1,\sigma_{12}S_1,\sigma_{13}S_1,T_1,\sigma_{12}T_1,\sigma_{13}T_1$. 
This ideal defines a variety $\DD_2$ of co-dimension 2.
This variety is intersected with the degree four hypersurface $\det\widehat{P} =0$
(which prescribes a singularity on the quadratic surface).

The second component restricts only the coefficients $c_1,c_2,c_3,d_1,d_2,d_3$ and coincides with $\DD_2$.
Hence, it subsumes the real points of the first component. The variety $\DD_2$ specifies the quadratic part of (\ref{eq:gendarb})
to be in the $O(3)$-orbit of \mbox{$A_1x^2+A_1y^2+A_2z^2$}. 
The projective dimension of this orbit is 3
(rather than $4=1+3$) because the rotations around the $z$-axis preserve  $A_1x^2+A_1y^2+A_2z^2$. 
Based on the identification with $\DD_2$, the rotational quadratic surfaces (and the paraboloids like $z^2+x=0$) 
can be considered as Dupin cyclides. 
\end{remark}

\begin{remark} \rm \label{rem:(6:1)} 
We have seen from Theorem \ref{th:m2} that a cubic Dupin cyclide is determined uniquely 
from the coefficients $b_1,b_2,\dots,d_3$ to the cubic and quadratic monomials in (\ref{eq:gendarbp}).
For quartic Dupin cyclides (\ref{eq:gendarb}) with $a_0\neq 0$, the projection $\DD_0\to\PP^9$ 
to the coefficients $a_0,b_1,b_2,\dots,d_3$ is a 6:1  
map generically. It is sufficient to see this 6:1 correspondence for the canonical form (\ref{eq:orthdupin}).
With fixed $A_1,A_2,A_3$, there are these 6 possibilities for the linear part: 
$\pm Dx+F$,  $\pm (\sigma_{12}D)y+\sigma_{12}F$ and $\pm (\sigma_{13}D)z+\sigma_{13}F$, 
with $D,F$ satisfying (\ref{eq:starta})--(\ref{eq:startb}).
It can also be checked (by computing a  Gr\"obner basis) 
that a monomial basis for $\DI_4^*$ in the ring $\RR(c_1,c_2,c_3,d_1,d_2,d_3)[e_1,e_2,e_3,f_0]$ 
has 6 elements.
\end{remark}

\subsection{Dimension, degree, Hilbert series}
\label{sc:climit1}
It is straightforward to compute the Hilbert series for the computed ideals using {\sf Singular} or {\sf Maple}.
The Hilbert series for the projective variety $\DD_0$
is the rational function $H_1(t)/(1-t)^{10}$ with
\begin{align}
	H_1(t)= &\, 1+4t+10t^2+20t^3+35t^4+46t^5+39t^6+10t^7-14t^8-48t^9 \nonumber\\
	& +25t^{10}+56t^{11}-105t^{12}+84t^{13}-37t^{14}+9t^{15}-t^{16}.
\end{align}
The dimension of $\DD_0$ is indeed $10-1=9$, and the degree equals $H_1(1)=134$.
There are ${8\choose 3}-46=10$ linearly independent polynomials of the minimal degree 5.

The Hilbert series for the affine variety $\DD_4^*$ is 
\begin{align}
	\frac{(1+t)(1+t+t^2)(1+2t+4t^2+t^3-t^4)}{(1-t)^6}.
\end{align}
The dimension of $\DD_4^*$ is 6, and the degree equals 42.

The Hilbert series for projective variety $\DD_3$ is 
\begin{align}
	\frac{1+4t+10t^2+20t^3+35t^4-25t^5-9t^6+10t^7}{(1-t)^9}.
\end{align}
The dimension of $\DD_3$ is $9-1=8$, and the degree equals 46. 
Recall that $\DD_3$ describes the component $B_0\neq 0$ of the subvariety $a_0=0$ of $\DD_0$.
The homogeneous version of the Gr\"obner basis for $\DD_3$ 
with respect to $grevlex(b_1,b_2,b_3,...,f_0)$ has 261 elements. 
It has ${8\choose 3}+25=81$ linearly independent polynomials of the minimal degree 5.

The co-dimension of all considered spaces of Dupin cyclides 
inside the corresponding projective spaces of Darboux cyclides equals 4.

\begin{remark} \rm
	Beside the usual grading, the equations of our varieties have a weighted degree
	that reflects their invariance under the scaling $(x,y,z)\mapsto (\lambda x,\lambda y,\lambda z)$ with $\lambda\in\RR$.
	The weights of the variables are the following: 
	\[ 
	wd(b_j)=1, \quad wd(c_j)=wd(d_j)=2, \quad wd(e_j)=3 \quad \mbox{for} \quad j\in\{1,2,3\},
	\] 
	and  $wd(a_0)=0$,  $wd(f_0)=4$.
	The symmetries $\sigma_{12},\sigma_{13}$ do not change the weighted degree.
	The 12 equations in  Proposition \ref{th:d0gen} 
	have these weighted degrees:
	\[ 
	wd(K_j)=8, \qquad wd(L_j)=7, \qquad wd(M_k)=9 \qquad \mbox{for} \quad j\in\{1,2,3\},
	\] and $wd(N_1)=8$, $wd(N_2)=10$, $wd(N_3)=12$.
\end{remark}

\section{Classification of real cases of Dupin cyclides}
\label{sec:class}

The torus equation (\ref{eq:torus}) is defined over $\RR$ also when $r^2<0$ or $R^2<0$.
Similarly, the canonical equation  (\ref{eq:dupin}) is defined over $\RR$ if $\alpha,\beta,\gamma\in\RR$,
or if exactly two of these numbers 
are on the imaginary line $\sqrt{-1}\,\RR\subset\CC$. 
Then we may obtain degenerations to surfaces with a few (if any) real points.

Section \ref{sc:degenerate} classifies all degenerations of Dupin cyclides.
For that purpose, Section \ref{sec:izomobius} defines general M\"obius isomorphisms between Dupin cyclides and toruses,
and follows the cases when they are defined over $\RR$. 
Section \ref{sc:qrr} defines a M\"obius invariant $J_0$ of Dupin cyclides.
This invariant and a few semi-algebraic conditions classify the Dupin cyclides up to real M\"obius transformations.

\subsection{M\"obius isomorphisms to the torus}
\label{sec:izomobius}


Spherical inversions or M\"obius transformations between Dupin cyclides and a general torus
have been constructed geometrically \cite[Theorem 3.5]{Ottens} and computed \cite[\S 2.3]{vdValk}.
An explicit M\"obius isomorphism that maps a canonical Dupin cyclide (\ref{eq:dupin}) 
to the torus equation (\ref{eq:torus}) is given by
\begin{align} \label{eq:mobt}
	(x,y,z)\mapsto & \frac{\alpha\delta+\beta\varepsilon}{\gamma}\,(1,0,0)
	+ \frac{2\beta\varepsilon}{(x-\gamma)^2+y^2+z^2}\, (x-\gamma,y,z),
\end{align}
where $\beta=\sqrt{\alpha^2-\gamma^2}$, $\varepsilon=\sqrt{\delta^2-\gamma^2}$. 
This M\"obius transformation is defined over $\RR$ if and only if $\gamma\in\RR\setminus\{0\}$ and $\beta\varepsilon\in\RR$.
If $\gamma=0$, 
the canonical equation (\ref{eq:dupin}) coincides already with the torus equation (\ref{eq:torus})  
with  $\alpha=R$, $\delta=r$. With $\beta\varepsilon\in\RR$, the minor and major radiuses of the torus are given by
\begin{equation}  \label{eq:rrr}
	r=\frac{\gamma^2\varepsilon}{\alpha\varepsilon+\beta\delta}, \qquad 
	R=\frac{\gamma^2\beta}{\alpha\varepsilon+\beta\delta},
\end{equation}
We can further apply scaling by 
the factor 
\begin{equation} \label{eq:scale}
	\frac{\gamma^2}{\alpha\varepsilon+\beta\delta}
\end{equation}
to the immediate torus equation 
if either all $\alpha,\delta,\beta,\varepsilon\in\RR$ or all 
$\alpha,\delta,\beta,\varepsilon\in\sqrt{-1}\,\RR$.
The resulting torus equation has $r=\varepsilon$,  $R=\beta$. 
Otherwise the scaling can be adjusted by the factor $\sqrt{-1}$, 
and the radiuses become 
$r^2=\gamma^2-\delta^2$, $R^2=\gamma^2-\alpha^2$.

On the other hand, the canonical equation (\ref{eq:dupin}) 
is symmetrical \cite[(1)-(2)]{DuPratt1}
with respect to the simultaneous interchange $y\leftrightarrow z$, $\alpha\leftrightarrow\gamma$. 
This symmetry implies a M\"obius equivalence to the torus with the minor radius $\sqrt{R^2-r^2}$ 
(and the same major radius $R$) as well. 
Up to scaling, this M\"obius equivalence is symmetric to (\ref{eq:mobt}):
\begin{align} \label{eq:mobt2a}
	(x,y,z)\mapsto & \; \frac{\gamma\delta+\sqrt{(\gamma^2-\alpha^2)(\delta^2-\alpha^2)}}{\alpha}\,(1,0,0) \nonumber\\[2pt]
	& \; + \frac{2\sqrt{(\gamma^2-\alpha^2)(\delta^2-\alpha^2)}}{(x-\alpha)^2+y^2+z^2}\, (x-\alpha,z,y),
\end{align}
Both M\"obius isomorphisms are defined over $\RR$ if $\alpha\gamma\neq 0$ and  
$\delta^2$ is between $\alpha^2$ and $\gamma^2$ on the real line.
This means that  $0<r/R<1$, because (\ref{eq:rrr}) implies  
\begin{equation} \label{eq:rRt}
	\frac{r}{R} = \sqrt{\frac{\delta^2-\gamma^2}{\alpha^2-\gamma^2}}.
\end{equation}
A composition of the two M\"obius isomorphisms relates 
two toruses with the minor radiuses $r$ and $\sqrt{R^2-r^2}$, when $r<R$.
This transformation is obtained explicitly by applying (\ref{eq:mobt2a}) with
$\alpha=R$, $\gamma=0$, $\delta=r$. After additional scaling of $(x,y,z)$ by $r/R$ we obtain 
the M\"obius transformation
\begin{align} \label{eq:mobt2}
	(x,y,z)\mapsto & \big(\sqrt{R^2-r^2},0,0\big)
	+ \frac{2r\,\sqrt{R^2-r^2}}{(x-r)^2+y^2+z^2}\, (x-r,z,y),
\end{align}
that brings torus (\ref{eq:torus}) to the torus 
\begin{equation} \label{eq:torus2}
	(x^2+y^2+z^2+r^2)^2-4R^2(x^2+y^2)=0.
\end{equation}
When $r^2\ge R^2$, this M\"obius duality is not defined over $\RR$. 
If $0<R^2<r^2$, then (\ref{eq:torus}) defines a singular {\em spindle} torus \cite[p.~288]{DuChand}, 
and the surface (\ref{eq:torus2}) has no real points. 
If $0<R^2=r^2$, then (\ref{eq:torus}) defines a singular {\em horn} torus, 
while (\ref{eq:torus2}) defines a circle.

\subsection{The toroidic invariant for $r/R$}
\label{sc:qrr}

Any torus (\ref{eq:torus}) has two clear families of circles on it, namely on the vertical planes $ax+by=0$
or horizontal planes $z=c$. These circles are the principal curvature lines on the torus, and are known as
{\em principal circles}. Less known are two families of {\em Villarceau circles} \cite{VilCircles} 
on the bitangent planes $z=ax+by$ with $a^2+b^2=r^2/(R^2-r^2)$, on smooth toruses with $r<R$. 
The angles $\theta$, $\frac{\pi}2-\theta$ between principal and Villarceau circles depend only on the families of the involved circles.
The sine (or the complementary cosine) of $\theta$ 
equals \cite{VilCircles} to the quotient $r/R$.
The duality (\ref{eq:mobt2}) of toruses with 
the minor radiuses $r$ and $\sqrt{R^2-r^2}$
underscores constancy of the angle pair $\theta$, $\frac{\pi}2-\theta$  
under the (conformal!) M\"obius transformations. 
Krasauskas noted to us that the numbers 
$r^2/R^2$ and $1-r^2/R^2$ are equal to the two possible cross-ratios 
within the quaternionic representation \cite{Zube} of Dupin cyclides.

The symmetry between 
$r/R$ and $\sqrt{1-r^2/R^2}$ 
leads to this invariant under the M\"obius transformations:
\begin{equation}
	J_0 =\frac{r^2}{R^2} \left(1-\frac{r^2}{R^2}\right).
\end{equation}
We define the invariant $J_0$ for general Dupin cyclides by the M\"obius equivalence. 
The maximal value $J_0=1/4$ gives the ``most round" cyclides (with $R=\sqrt{2}\,r$) 
that optimize the Willmore energy \cite{White73}
\begin{equation}
	\int\!\!\int_{S\,} H^2 dA
\end{equation}
for the smooth real surfaces $S$ with the torus topology. 
The integrand $H^2$ is the mean curvature $H$ squared,
and $dA$ is the infinitesimal area element.
The Willmore energy is conformally invariant,  
and it equals  \cite[pg 275]{Willmore}
\begin{equation}
	\frac{\pi^2R^2}{r\,\sqrt{R^2-r^2}}  = \frac{\pi^2}{\sqrt{J_0}}
\end{equation}
for a smooth torus (\ref{eq:torus}).
The duality breaks down for $J_0\le 0$, as M\"obius transformation (\ref{eq:mobt2}) is then not defined over $\RR$.
The singular horn torus (with $r=R$) is paired to the degeneration to a circle ($r=0$), 
and the spindle toruses (with $r>R$) are paired with algebraic surfaces with no real points ($r\in \sqrt{-1}\,\RR$).

We apply our results to compute the invariant $J_0$ for all Dupin cyclides.
In the case of a canonical quartic cyclide (\ref{eq:orthdupin}), 
we choose M\"obius equivalence (\ref{eq:mobt}) with a torus,
and obtain
\begin{align}
	\frac{r^2}{R^2}= \frac{\delta^2-\gamma^2}{\alpha^2-\gamma^2}=\frac{2A_2+A_3-A_1}{A_2-A_3}
\end{align}
due to (\ref{eq:rRt}) and (\ref{eq:greeksq}). Then
\begin{align} \label{eq:torcos}
	J_0 & =-\frac{(\delta^2-\gamma^2)(\delta^2-\alpha^2)}{(\alpha^2-\gamma^2)^2} \\
	& = \frac{(A_1-2A_2-A_3)(A_2+2A_3-A_1)}{(A_2-A_3)^2}.
\end{align}
To obtain expressions of $J_0$ for the general quartic cyclide (\ref{eq:mainForm}),
we first eliminate $A_2,A_3$ 
as in Lemma \ref{th:dgraph}. The result is
\begin{align} \label{eq:j0witha}
	J_0 = \frac{7A_1^2-8C_0A_1+2C_0^2+W_1}{3A_1^2-2C_0A_1-C_0^2+4W_1}.
\end{align}
After the elimination of $A_1$, we generically obtain
\begin{align}
	J_0 & = \frac{6C_0(d_2e_1+d_1e_2-c_1e_3-c_2e_3)+(C_0^2+8W_1+28f_0)e_3}
	{4C_0(d_2e_1+d_1e_2-c_1e_3-c_2e_3)+(7W_1+12f_0)e_3} \\
	& = \frac{(4f_0-C_0^2)(28f_0+C_0^2)+4(8f_0+C_0^2)W_1-12C_0(W_2-2E_0)}
	{12f_0(4f_0-C_0^2)+(28f_0+C_0^2)W_1-8C_0(W_2-2E_0)}.
\end{align}
These expressions were obtained after heavy Gr\"obner basis computations with the new variable $J_0$
(or rather, its numerator and denominator separately) and the superfluous $A_1$. 
They can be checked by reducing (to $0$) the numerator of the difference to (\ref{eq:j0witha})
in the Gr\"obner basis in $\DR_4^*[A_1]$ for Lemma \ref{th:dgraph}.

To obtain an expression of $J_0$ for the cubic cyclide (\ref{eq:canpara}), 
we apply the procedure at the beginning of this section: 
transform the variables according to the shift (\ref{eq:transl}) and the form (\ref{eq:gendarb}),
homogenize with $a_0$, and set $a_0=0$. 
Here is a relatively compact rational expression of the lowest weighted degree obtained after heavy computations:
\begin{equation} \label{eq:j0cubic}
	J_0 = 3\,\frac{B_0(-2Y_5+c_1c_2+c_1c_3+c_2c_3)+Y_6}{B_0(Y_5+2c_1^2+2c_2^2+2c_3^2+c_1c_2+c_1c_3+c_2c_3)+2Y_6},
\end{equation}
with $Y_5 = d_1^2+d_2^2+d_3^2-4b_1e_1-4b_2e_2-4b_3e_3$, and
\begin{align*}
	Y_6 = & \; 5b_1^2d_1^2+5b_2^2d_2^2+5b_3^2d_3^2 +10b_1b_2(c_3d_3-d_1d_2)+10b_1b_3(c_2d_2-d_1d_3) \nonumber \\
	&  +10b_2b_3(c_1d_1-d_2d_3)-2C_0(b_1b_2d_3+b_1b_3d_2+b_2b_3d_1)  \nonumber \\
	&  -b_1^2(c_1^2+4c_2c_3)-b_2^2(c_2^2+4c_1c_3)-b_3^2(c_3^2+4c_1c_2).
\end{align*}
The variables $e_1,e_2,e_3$ could be eliminated in (\ref{eq:j0cubic}) using Lemma \ref{th:m2}, 
but the obtained rational expression of degree 6 (after using $B_0=1$) is much larger.

\subsection{Classification of degenerate Dupin cyclides}
\label{sc:degenerate}

The cubic Dupin cyclides (\ref{eq:canpara}) always have real points and are easy to classify, 
starting from \cite[p.~151]{DuPratt2}, \cite[p.~288]{DuChand}:
\begin{itemize}
	\item smooth cyclides when $pq<0$;
	\item spindle cyclides when $pq>0$, $p\neq q$;
	\item horn cyclides when $pq=0$, $p\neq q$;
	\item reducible surface (a sphere and a tangent plane) when $p=q\neq0$; 
	\item reducible surface (a plane and a point on it) when $p=q=0$.
\end{itemize}
Degeneration to quadratic surfaces is explained in Remark \ref{rm:quadr}. 

The full classification of the real points defined by the quartic canonical equation (\ref{eq:dupin})
depends on the order of $0,\alpha^2,\gamma^2,\delta^2$ on the real line, as we demonstrate shortly.
The classification is depicted in Figure \ref{fig:degenDu}\,\refpart{a}. 
The border conditions $\alpha^2=0$, $\gamma^2=0$, $\delta^2=0$ are depicted by 3 intersecting circles
(marked on the outer side by $\alpha^2,\gamma^2,\delta^2$). Their inside disks represent positive values 
of $\alpha^2,\gamma^2,\delta^2$, respectively, while the negative values are represented by the outer sides of the circles. 
The conditions $\alpha^2=\gamma^2,\alpha^2=\delta^2,\gamma^2=\delta^2$ are represented by the 3 lines intersecting at the center. 
Their markings near the edge of the picture (say, $\alpha^2$ and $\gamma^2$) indicate which of the values ($\alpha^2$ or $\gamma^2$)
is larger on either side of the line. Most triangular regions are marked by a sequence of inequalities between $0,\alpha^2,\gamma^2,\delta^2$;
these {\em admisible} regions represent the cases when the canonical equation is defined over $\RR$. 
The 6 asymptotic outside regions represent the cases when all three $\alpha^2,\gamma^2,\delta^2$ are negative;  they 
are not admissible. The admissible regions, several edges and vertices are labeled by abbreviations for various types of Dupin cyclides.
The labels are not repeated when the type does not change when passing from a triangular region to its edge or vertex. 
These coincidences are represented by the non-strict inequalities between $0$ and $\alpha^2,\gamma^2,\delta^2$.  
When a distinct edge and its vertex have the same type, the type is indicated near the vertex, and an arrow is added to the direction of that edge.
The values of $J_0$ can be considered as constant along the radial directions from the center $Q$, with the optimal $J_0=1/4$ in the vertical directions,
$J_0=-\infty$ on the horizontal line $\alpha^2=\gamma^2$, and $J_0=0$ along the other two drawn lines; see (\ref{eq:torcos}). 
The trivial case  $(\alpha^2,\gamma^2,\delta^2)=(0,0,0)$ is not represented in  Figure \ref{fig:degenDu}\refpart{a}, 
though it is represented 
by the origin point in Figure \ref{fig:degenDu}\refpart{b}.

\begin{figure}
	\begin{tikzpicture}[scale=0.55]
		\node at (-13,10) {$(a)$};
		\draw[thick] (-2.4641,0) circle [radius=8];\draw[thick] (1,2) circle [radius=8];\draw[thick] (1,-2) circle [radius=8];
		\node at (1.8,10.45) {$\alpha^2$};  \node at (7.7,7.3) {$\alpha^2$};
		\node at (7.3,-7.6) {$\gamma^2$};  \node at (2,-10.4) {$\gamma^2$};
		\node at (-10.1,-3.5) {$\delta^2$};  \node at (-10.1,3.5) {$\delta^2$};
		\draw[very thick] (-11.6,0) -- (10.2,0); 
		\node at (-11.15,0.4) {$\alpha^2$}; \node at (-11.15,-0.5) {$\gamma^2$};
		\node at (9.8,0.4) {$\alpha^2$}; \node at (9.8,-0.5) {$\gamma^2$};
		\draw[thick] (-5.85,9.9) -- (5.7,-10.2);
		\node at (-5.1,9.7) {$\alpha^2$}; \node at (-6,9.2) {$\delta^2$};
		\node at (5.1,-10) {$\delta^2$}; \node at (5.85,-9.4) {$\alpha^2$};
		\draw[thick] (-5.9,-10) -- (5.7,10.2);
		\node at (-5.15,-9.5) {$\gamma^2$}; \node at (-5.85,-9.2) {$\delta^2$}; 
		\node at (5.15,9.9) {$\delta^2$};  \node at (5.8,9.5) {$\gamma^2$};
		\node  at (1.8,8.3) {${\gamma^2\!<\!\delta^2\!\leq\!0\!\leq\!\alpha^2}$};
		\node[rotate=45]  at (-8.4,2.9) { ${\gamma^2\!<\!\alpha^2\!\leq\!0\!\leq\!\delta^2}$};
		\node[rotate=-50]  at (-8.7,-2.2) { ${\alpha^2\!<\!\gamma^2\!\leq\!0\!\leq\!\delta^2\!}$};
		\node at (1.8,-8.5) { ${\alpha^2\!<\!\delta^2\!\leq\!0\!\leq\!\gamma^2\!}$};
		\node[rotate=45]  at (6.4,-5.9) { ${\delta^2\!<\!\alpha^2\!\leq\!0\!\leq\!\gamma^2}$};
		\node[rotate=-45]  at (6.4,5.9) { ${\delta^2\!<\!\gamma^2\!\leq\!0\!\leq\!\alpha^2}$};
		\node  at (0.25,4.5) { ${\!0\!\leq\!\gamma^2\!<\!\delta^2\!<\!\alpha^2}$};
		\node  at (-3.6,1.5) { ${0\!\leq\!\gamma^2\!<\!\alpha^2\!<\!\delta^2}$};
		\node  at (-3.6,-1.5) { ${0\!\leq\!\alpha^2\!<\!\gamma^2\!<\!\delta^2}$};
		\node  at (0.05,-4.36) { ${0\!\leq\!\alpha^2\!<\!\delta^2\!\!<\!\!\gamma^2}$};
		\node  at (3.1,-1.5) { ${0\!\leq\!\delta^2\!<\!\alpha^2\!<\!\gamma^2}$};
		\node  at (3.05,1.4) { ${0\!\leq\!\delta^2\!<\!\gamma^2\!<\!\alpha^2}$};
		\node at (-0.1,2.8) {SM}; 
		\node at (-0.1,-2.8) {SM};
		\node at (3.6,3.2) {SP}; \node at (3.6,-3.2) {SP};\node at (-3.6,3.2) {SP}; \node at (-3.6,-3.2) {SP};
		\node at (-2.45,-4.8) {H}; \draw[-stealth] (-2.5,-4.5)--(-2.1,-3.8); 	\node at (2,3) {H};
		\node at (-2.45,4.8)  {H}; \draw[-stealth] (-2.5,4.5)--(-2.1,3.8); \node at (2,-3) {H};
		\node at (5.15,0.33) {R}; \draw[-stealth] (4.85,0.3)--(4.15,0.3); \node at (-3.25,0.33) {R}; 
		\node at (-0.15,-0.8) {Q};
		\node at (-7.175,0.33) {D};
		\node at (8,3.5) {NP}; \node at (8,-3.5) {NP}; 
		\node at (3.65,5.95) {C}; \node at (3.65,-5.95) {C};  
		\node at (-10.1,0.4) {P}; \draw[-stealth] (-9.9,0.3)-- (-9.2,0.3) ;
		\node at (5.1,8.4) {P};  \draw[-stealth] (4.8,8.15)-- (4.4,7.5) ;
		\node at (5.15,-8.5) {P}; \draw[-stealth] (4.85,-8.2)-- (4.45,-7.55) ;
		\node at (-7,5.5) {PP}; \node at (-1.5,8.75) {PP}; \node at (-7,-5.5) {PP}; \node at (-1.5,-8.75) {PP}; 
	\end{tikzpicture}
	\vspace{0.5cm}
	\begin{multicols}{2}
		\begin{tikzpicture}[scale=1.2]
			\node at (-2,2.5) {$(b)$};
			\draw[->,very thick] (0,0)--(3,0)  node[below]{$r^2$};
			\draw[dashed,thin] (-2,0)--(0,0);
			\draw (0,0)--(0,-0.07);
			\draw[->,very thick] (0,0)--(0,2.5) node[left]{$R^2$};
			\draw[dashed,thin] (0,-1.7)--(0,-0.35);
			\draw[very thick] (0,0)--(2.5,2.5); \node[rotate=45] at (1.8,2.1){$r^2=R^2$};
			\draw[very thick,opacity=0.5] (-1.4,-1.4)--(0,0);
			\node  at (0.6,1.5) {SM};
			\node at (1.85,1.55) {H};
			\node at (1.5,0.7) {SP};
			\node  at (1.5,-0.17) {D};
			\node  at (-1,1) {NP}; 	\node  at (-1.3,-0.5) {NP};
			\node  at (0.05,-0.2) {P}; \draw[-stealth] (-0.1,-0.23) -- (-0.37,-0.5);
			\node  at (1,-1) {PP}; \node  at (-0.5,-1.3) {PP};
			\node at (-0.15,1.3) {C};
		\end{tikzpicture}
		
		$(c)$
		\begin{enumerate}[]
			\item SM: Smooth Dupin cyclide
			\vspace{-0.25cm}
			\item SP:\; Spindle Dupin cyclide
			\vspace{-0.25cm}
			\item H:\;\;\, Horn Dupin cyclide
			\vspace{-0.25cm}
			\item R:\;\;\, Two spheres touching each other
			\vspace{-0.25cm}
			\item Q:\;\;\,  Sphere and one point on it
			\vspace{-0.25cm}
			\item D:\;\;\, Double sphere
			\vspace{-0.25cm}
			\item C:\;\;\, Circle
			\vspace{-0.25cm}
			\item P:\;\;\,  One real point
			\vspace{-0.25cm}
			\item PP: Two real points
			\vspace{-0.25cm}
			\item NP: No real points 
		\end{enumerate}
	\end{multicols}
	\vspace{-0.5cm}
	\caption{\refpart{a} Classification of real points on quartic cyclides in the canonical form  (\ref{eq:dupin}).\\
		\refpart{b} Classification of real points on toruses (\ref{eq:torus}). \refpart{c} Legend. }
	\label{fig:degenDu}
\end{figure}

This classification can be proved from the easier classification of torus equations (\ref{eq:torus}) in Figure \ref{fig:degenDu}\,\refpart{b},
and by considering which of the two M\"obius transformations (\ref{eq:mobt}) and (\ref{eq:mobt2a}) are defined over $\RR$. 
The case $r^2<\min(0,R^2)$ of ``toruses" with no real points can be seen from this alternative form of (\ref{eq:torus}):
\begin{equation}
	(x^2+y^2+z^2-R^2+r^2)^2-4r^2(x^2+y^2)+4(R^2-r^2)z^2=0.
\end{equation}
The cases of toruses lie on the circles $\gamma^2=0$ and $\alpha^2=0$ of Figure \ref{fig:degenDu} \refpart{a}.
The M\"obius equivalence (\ref{eq:mobt}) is defined over $\RR$ if $\gamma^2>0$ and $\gamma^2$ is either larger or smaller 
than both $\alpha^2,\delta^2$. It preserves $J_0$ and maps the applicable triangular regions onto the segments of the circle $\gamma^2=0$ representing toruses.
Adjacency of the corresponding triangular regions and segments cannot hold for the  two lower-right regions SM, SP 
with $\alpha^2,\delta^2\in[0,\gamma^2)$; these are the cases when the scaling by (\ref{eq:scale}) has to be adjusted by $\sqrt{-1}$.
Similarly, (\ref{eq:mobt2a}) is defined over $\RR$ if $\alpha^2>0$ and $\alpha^2$ is either larger or smaller 
than both $\gamma^2,\delta^2$. This covers all cases except the line $\alpha^2=\gamma^2$ and the two leftmost triangular regions.
When $\alpha^2=\gamma^2$, canonical equation (\ref{eq:dupin}) factorizes
and defines (generically) two spheres with the centers at $(x,y,z)=(\pm\alpha,0,0)$ and touching at the point $(\delta,0,0)$. 
If then $\alpha^2<0$, only the touching point is real; the other few deeper degenerations are straightforward.
For the leftmost triangular region with $\gamma^2<\alpha^2\le0$, the canonical equation can be rearranged to
\begin{equation}
	\big(x^2+y^2+z^2-\alpha^2+\gamma^2-\delta^2\big)^2+4(-\gamma^2+\alpha^2)z^2+(-4)(\alpha\delta-\gamma x)^2=0.
\end{equation} 
All three terms are positive for that region, and the Dupin surface then consists of two points on the line $z=0$, $x=\alpha\delta/\gamma$.
Similarly, two points are obtained for the other leftmost region $\alpha^2<\gamma^2\le0$.

The conditions on $\alpha^2,\gamma^2,\delta^2$ can be directly translated to the conditions on the coefficients $A_1,A_2,A_3$ in (\ref{eq:orthdupin})
using (\ref{eq:greeksq}). The quadratic covering (\ref{eq:starta}) of the $(A_1,A_2,A_3)$-plane confirms the topology of 4 admissible regions 
connected at 6 corners. 
The translated classification is as follows:
\begin{itemize}
	\item Smooth Dupin cyclides, when $A_1\le\min(A_2,A_3)<A_1-A_2-A_3<\max(A_2,A_3)$.
	Then $J_0\in(0,\frac14]$ and $A_2\neq A_3$, $A_2+A_3<0$.
	\item Horn cyclides, when either $A_1-A_2-A_3=\max(A_2,A_3)<0$, $A_2\neq A_3$,
	or $A_1-\min(A_2,A_3)=A_2+A_3<0$, $A_2\neq A_3$. In either case, $J_0=0$.
	\item Spindle cyclides, when either $A_1\le \min(A_2,A_3)<\max(A_2,A_3)<A_1-A_2-A_3$. 
	or $A_1-\min(A_2,A_3)<A_2+A_3\le 0$, $A_2\neq A_3$. In either case, $J_0<0$, $A_2\neq A_3$.
	\item Reducible surface of two touching spheres, when 
	$3A_2<A_1<A_2=A_3$  (then $A_2=A_3<0$)
	or $A_1<A_2=A_3\le 0$. In either case, $J_0=-\infty$.
	\item Reducible surface of a sphere and a point on it, when $A_2=A_3=\frac13A_1<0$.  Then $J_0$ is undefined.
	\item Double sphere, when $A_1=A_2=A_3<0$. Then $J_0=-\infty$.
	\item A circle, when $A_2+A_3=0$, $A_1=\min(A_2,A_3)<0$. Then $J_0=0$, $A_1<0$. 
	\item Two real points, when either $\min(A_2,A_3)<A_1-A_2-A_3\le A_1\le\max(A_2,A_3)$
	(then $J_0>0$, $A_2\neq A_3$, $A_2+A_3\ge 0$)  or $\max(A_2,A_3)\le A_1\le A_1-A_2-A_3$, $A_2\neq A_3$
	(then $J_0<0$, $A_2+A_3\le 0$).
	\item One real point, when either $A_1-A_2-A_3=\min(A_2,A_3)\le 0$, $A_2+A_3>0$ (then $J_0=0$), or 
	$0\le A_2=A_3< A_1$ (then $J_0=-\infty$), or $A_1=A_2=A_3=0$.
	\item No real points, when $A_1-A_2-A_3<\min(A_2,A_3)\le A_1\le\max(A_2,A_3)$. Then $J_0<0$, $A_2+A_3>0$.
\end{itemize}
Further translation in terms of the coefficients in (\ref{eq:mainForm}) is cumbersome.  
Some basic distinctions are determined by the $J_0$-invariant, represented by the directions from the central point Q in Figure \ref{fig:degenDu}\refpart{a}.
The circular boundaries $\alpha^2=0$, $\gamma^2=0$, $\delta^2=0$ do not represent semi-algebraic conditions (except at the vertices C, D), 
as they separate the cases of whether the surface equations (\ref{eq:dupin}) and eventually (\ref{eq:mainForm}), (\ref{eq:gendarb}) are defined over $\RR$ or not.
To distinguish the 6 regions around Q and the 6 outer regions, it is tempting to invent a polynomial that vanishes at the vertices C, D 
and has different signs for the inner and outer regions. But the polynomials in $\alpha^2,\gamma^2,\delta^2$ (or the other coefficients) that vanish at C, D
also vanish at the respectively opposite meeting corners or the PP and NP regions. 
The practical suggestion to distinguish the cases is to compute $A_1$ using one of the equations (linear in $A_1$) of Lemma \ref{th:dgraph},
and then compute $A_2,A_3$ as the roots of the quadratic polynomial $X^2-(A_2+A_3)X+A_2A_3$. 
After eliminating $A_2,A_3$, we get the equation
\begin{equation} \label{eq:xa1a3}
	X^2+(A_1-C_0)X+W_1-C_0A_1+A_1^2=0
\end{equation}
with the roots $X=A_2$, $X=A_3$. If preferable, one can reduce the degree in $A_1$ in the last equation using (\ref{eq:eqqf4}).

\section*{Acknowledgments}
The authors would like to thank Rimvydas Krasauskas and Severinas Zub\.e for useful remarks and suggestions.

\section*{Funding}
This work is part of a project that has received funding from the European Union’s Horizon 2020 research and innovation programme under the Marie Skłodowska-Curie grant agreement No 860843.

\end{document}